\documentclass[11pt]{amsart}

\usepackage[a4paper,margin=1in]{geometry}
\usepackage{amsmath,amssymb,amsthm,mathtools}
\numberwithin{equation}{section}
\usepackage{bm}
\usepackage{microtype}
\usepackage{xcolor}
\usepackage[colorlinks=true,linkcolor=blue!45!black,citecolor=blue!45!black,urlcolor=blue!55!black]{hyperref}

\newcommand{\R}{\mathbb R}
\newcommand{\dd}{\,\mathrm d}
\newcommand{\cK}{\mathcal K}
\newcommand{\cH}{\mathcal H}
\newcommand{\Tr}{\operatorname{Tr}}

\newtheorem{theorem}{Theorem}[section]
\newtheorem{proposition}[theorem]{Proposition}
\newtheorem{lemma}[theorem]{Lemma}
\newtheorem{corollary}[theorem]{Corollary}
\theoremstyle{remark}
\newtheorem{remark}[theorem]{Remark}

\title{\bfseries A Quantitative P\'olya--Szeg\H{o} Theorem for Tangential Polygons}
\author{Changfeng Gui}
\address{Department  of  Mathematics,  University  of  Macau,  Macau  SAR,  P. R. China}
\address{Zhuhai UM Science and Technology Research Institute, Hengqin, Guangdong, 519031,  P. R. China}
\email{changfenggui@um.edu.mo}
\author{Yeyao Hu}
\address{School of Mathematics and Statistics, HNP-LAMA, Central South University, Changsha, Hunan 410083, P. R. China}
\email{huyeyao@gmail.com}
\author{Qinfeng Li}
\address{School of Mathematics, Hunan University, Changsha, P.R. China.}
\email{liqinfeng1989@gmail.com}
\date{}
\keywords{Torsional rigidity, first Dirichlet eigenvalue, tangential polygon, regular polygon, P\'olya--Szeg\H{o} conjecture, mixed boundary problem, quantitative stability}
\subjclass[2020]{35J25, 49Q10, 52A40}

\begin{document}
	
	\begin{abstract}
		For a bounded Lipschitz domain $\Omega\subset\mathbb R^2$, let
		$T(\Omega)=\int_\Omega u_\Omega\,\dd x$ denote its torsional rigidity,
		where $-\Delta u_\Omega=1$ in $\Omega$ and $u_\Omega=0$ on $\partial\Omega$.
		We prove a quantitative P\'olya--Szeg\H{o} inequality for tangential polygons.
		Let $N\ge3$, let $P$ be a tangential $N$-gon, set $A=|P|$, and let
		$R_N$ be the regular $N$-gon of area $A$.
		Writing $L(\cdot)$ for perimeter, we obtain the explicit deficit estimate
		\[
		T(R_N)-T(P)\ge
		\frac{A^2}{8N\tan(\pi/N)}
		\left(1-\frac{L(R_N)^2}{L(P)^2}\right)^2.
		\]
		Thus, at fixed area, the torsional deficit is controlled from below purely
		by the perimeter ratio. In particular, the regular $N$-gon is the unique
		maximizer of torsional rigidity among tangential $N$-gons of prescribed area;
		for $N=3$ this gives the classical triangular P\'olya--Szeg\H{o} theorem with
		a quantitative estimate. The same perimeter estimate yields an explicit
		positive lower bound for $T(R_{N+1})-T(R_N)$ for equal-area regular polygons,
		and hence a rather short alternative proof of the strict monotonicity of torsional
		rigidity in $N$. Combined with the Kohler--Jobin inequality, it also gives an
		explicit sufficient condition for the polygonal Faber--Krahn inequality within
		the tangential class. Our full quantitative inequality is stronger: it contains,
		in addition, a nonnegative angular Jensen deficit, which yields quantitative
		angular stability away from degenerate configurations.
	\end{abstract}
	
	\maketitle
	
	\section{Introduction}
	
	Let $\Omega\subset\R^2$ be a bounded Lipschitz domain. Its Dirichlet torsion function solves
	\begin{equation}\label{eq:intro-torsion}
		-\Delta u_\Omega=1\quad\text{in }\Omega,
		\qquad
		u_\Omega=0\quad\text{on }\partial\Omega,
	\end{equation}
	and the torsional rigidity is
	\begin{equation}\label{eq:intro-T}
		T(\Omega)=\int_\Omega u_\Omega\,\dd x.
	\end{equation}
	For a polygon $P$, we write $L(P)$ for its perimeter. Equivalently,
	\begin{equation}\label{eq:variational-intro}
		T(\Omega)=\max_{v\in H_0^1(\Omega)}
		\left\{2\int_\Omega v\,\dd x-\int_\Omega|\nabla v|^2\,\dd x\right\}.
	\end{equation}
	The Saint--Venant inequality states that the disk uniquely maximizes $T$ among planar domains of prescribed area; see, for instance, P\'olya--Szeg\H{o} \cite{PolyaSzego1951} and Henrot \cite{HenrotBook}. Its polygonal analogue asks whether the regular $N$-gon uniquely maximizes torsional rigidity among all $N$-gons of prescribed area. Classical symmetrization proves this for $N=3,4$ \cite{Polya1948,PolyaSzego1951}, while the problem remains open for $N\ge5$; a key obstruction is that Steiner symmetrization need not preserve the number of sides. For comparison, the analogous fixed-area polygonal isoperimetric problem for logarithmic capacity was settled by Solynin and Zalgaller, who proved that the regular $N$-gon minimizes logarithmic capacity among all $N$-gons of prescribed area \cite{SolyninZalgaller2004}. Sharp quantitative stability for the classical Saint--Venant inequality, with the disk as comparison shape, is contained in the work of Brasco, De Philippis, and Velichkov \cite{BrascoDePhilippisVelichkov2015}; in particular, the torsional deficit controls the square of the Fraenkel asymmetry. The fixed-number-of-sides problem has a different geometry, with the regular polygon replacing the disk as the comparison shape.
	
	The polygonal Saint--Venant problem has also been considered directly in recent shape-optimization work. Bogosel and Bucur discuss the corresponding torsional-rigidity conjecture in \cite[Remark~7.8]{BogoselBucur2024}. More recently, D\'iaz-Avalos and Laurain developed a shape-calculus framework for simple polytopes with a fixed number of facets and applied it to a numerical study of the polygonal and polyhedral Saint--Venant problem; in two dimensions their computations support the regular-polygon conjecture \cite{DiazAvalosLaurain2026}.
	
	We solve the fixed-area problem in the class of tangential polygons. A polygon is \emph{tangential}, or \emph{circumscribed}, if it is circumscribed about a circle. Every triangle is tangential, and for $N\ge4$ the class contains many nonsymmetric polygons. The common incircle provides a canonical decomposition into right triangles, making this class both geometrically flexible and analytically tractable. Solynin proved that, among $N$-gons with prescribed inradius, the regular $N$-gon minimizes torsional rigidity \cite{Solynin1993}, and Solynin and Zalgaller extended the corresponding inradius inequalities to curvilinear polygons \cite{SolyninZalgaller2010}. Fragal\`a, Gazzola, and Lamboley obtained sharp $p$-torsion estimates and verified the fixed-area polygonal P\'olya--Szeg\H{o} conjecture on a subclass selected by a quantitative asymmetry condition \cite{FragalaGazzolaLamboley2013}. Bucur and Fragal\`a proved regular-polygon optimality for several related constrained variational problems involving torsional rigidity \cite{BucurFragala2021}, while Keady obtained lower bounds specific to tangential and regular polygons \cite{Keady2021,Keady2022}. Taken together, these polygonal results address different constraints, partial subclasses, one-sided estimates, or numerical optimization; none gives a rigorous global fixed-area maximization over the entire class of tangential $N$-gons, nor the quantitative deficit obtained here.
	
	Our main result is quantitative. For a tangential $N$-gon $P$ with inradius $r$, write its interior angles as $\pi-2\alpha_i$, so that
	\[
	0<\alpha_i<\frac\pi2,
	\qquad
	\sum_{i=1}^N\alpha_i=\pi,
	\]
	and set
	\[
	a_i=\tan\alpha_i,
	\qquad
	a_*=\tan\frac\pi N.
	\]
	For $a>0$, let
	\[
	D_a=\{(x,y):0<x<1,\ 0<y<ax\},
	\]
	and denote by $h(a)$ the variational maximum on $D_a$ with Dirichlet condition on the vertical side and homogeneous Neumann conditions on the other two sides; equivalently, $h$ is defined by \eqref{eq:hdef} below.
	
	\begin{theorem}[Quantitative P\'olya--Szeg\H{o} inequality]\label{thm:main}
		Let $N\ge3$, let $P$ be a tangential $N$-gon with $r$ the inradius, and let $R_N$ be the regular $N$-gon with $|R_N|=|P|$. Then
		\begin{equation}\label{eq:main-deficit}
			\begin{aligned}
				T(R_N)-T(P)\ge 2r^4\Bigg[&
				N\left(h(a_*)-\frac{a_*}{8}\right)
				-\sum_{i=1}^N\left(h(a_i)-\frac{a_i}{8}\right)\\
				&+\frac{Na_*}{16}
				\left(\frac{L(P)^2}{L(R_N)^2}-1\right)^2
				\Bigg].
			\end{aligned}
		\end{equation}
		The angular deficit and the perimeter term on the right are nonnegative; in fact, each vanishes if and only if $P$ is regular.
		Moreover, if all $\alpha_i$ belong to a compact interval $J\Subset(0,\pi/2)$ containing $\pi/N$, then there exists $c_J>0$ such that
		\begin{equation}\label{eq:main-stability}
			T(R_N)-T(P)
			\ge2r^4\left[
			\frac{c_J}{2}\sum_{i=1}^N\left(\alpha_i-\frac\pi N\right)^2
			+\frac{Na_*}{16}\left(\frac{L(P)^2}{L(R_N)^2}-1\right)^2
			\right].
		\end{equation}
	\end{theorem}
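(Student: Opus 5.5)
The plan is to compare $T(P)$ with a sum of mixed Dirichlet--Neumann problems on the $2N$ right triangles cut out by the incircle. Let $O$ be the incenter and $T_j$ the tangency points. The segments from $O$ to the vertices and to the $T_j$ split $P$ into $2N$ right triangles. The two triangles at the vertex with interior angle $\pi-2\alpha_i$ have angle $\alpha_i$ at $O$, a leg of length $r$ along $OT_j$, and opposite leg $r\tan\alpha_i=ra_i$. Each is therefore congruent to $rD_{a_i}$, with the vertical side $\{x=1\}$ lying on $\partial P$.

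\textbf{Upper bound for $T(P)$.} Restrict the torsion function $u_P$ to each triangle. It is an admissible competitor for the mixed problem defining $h$: it vanishes on the Dirichlet side, and the interior sides carry no condition. By \eqref{eq:variational-intro}, $T(P)$ is the sum of the functionals $2\int u-\int|\nabla u|^2$ over the pieces. The scaling $x\mapsto rx$ multiplies this functional by $r^4$. Hence
\[
T(P)\le 2r^4\sum_{i=1}^N h(a_i).
\]
For $R_N$ with inradius $\rho$, the symmetry of the torsion function makes its restriction to each piece satisfy the natural Neumann condition on the interior sides. So equality holds: $T(R_N)=2N\rho^4h(a_*)$.

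\textbf{Algebra with the area constraint.} Put $S=\sum a_i$. Then $|P|=r^2S$ and $|R_N|=N\rho^2a_*$, so $\rho^2=r^2t$ with $t=S/(Na_*)$. Since $\tan$ is convex on $(0,\pi/2)$ and $\sum\alpha_i=\pi$, Jensen gives $t\ge1$. The perimeters are $L(P)=2rS$ and $L(R_N)=2\rho Na_*$, so $L(P)^2/L(R_N)^2=t^2r^2/\rho^2=t$. Write $h=g+a/8$ with $g(a)=h(a)-a/8$. Then
\[
T(R_N)-T(P)\ge 2r^4\Big[Nt^2g(a_*)-\sum g(a_i)+\tfrac{Na_*}{8}\,t(t-1)\Big].
\]
If $g(a_*)\ge0$, then $t^2g(a_*)\ge g(a_*)$. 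Also $t(t-1)/8\ge (t-1)^2/16$, because $2t\ge t-1$. These two inequalities give \eqref{eq:main-deficit}, and the perimeter term vanishes iff $t=1$, i.e.\ iff all $\alpha_i=\pi/N$ (strict convexity of $\tan$).

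\textbf{Angular deficit and stability (main obstacle).} The crux is a one-variable lemma on $\varphi(\alpha)=g(\tan\alpha)$. I need:
\begin{itemize}
\item $\varphi$ is strictly concave on $(0,\pi/2)$, so that Jensen with $\sum\alpha_i=\pi$ gives $N\varphi(\pi/N)\ge\sum\varphi(\alpha_i)$, with equality iff $P$ is regular;
\item $g(a_*)\ge0$.
\end{itemize}
For concavity, I would first compute $h'(a)$ by a Hadamard-type shape derivative. Moving the Neumann side $y=ax$ changes $h$ by a boundary integral of $|\nabla w_a|^2$ or $w_a$, where $w_a$ is the mixed-problem solution. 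I would then compare $\varphi''$ against $-\tfrac18\,\frac{d^2}{d\alpha^2}\tan\alpha$, using the monotonicity of $w_a$ in $a$; if needed, I would fall back on an explicit series expansion of $w_a$ in polar coordinates around the corner at the origin. For $g(a_*)\ge0$, I would look for a better test function than the ones tried so far: both $(1-x^2)/2$ and $(1-x^2)/4$ give only $h(a)\ge a/16$, so a competitor exploiting the $y$-direction is needed. Failing that, I would use concavity together with the boundary behaviour of $\varphi$ as $\alpha\to0$.

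Finally, \eqref{eq:main-stability} follows from the Jensen step. On a compact $J\Subset(0,\pi/2)$, uniform strict concavity gives $-\varphi''\ge c_J>0$. A second-order Taylor expansion around $\pi/N$, using $\sum(\alpha_i-\pi/N)=0$ to cancel the linear term, then yields $N\varphi(\pi/N)-\sum\varphi(\alpha_i)\ge\frac{c_J}{2}\sum(\alpha_i-\pi/N)^2$.
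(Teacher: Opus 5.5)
\textbf{Gap 1: the algebraic step relies on a false inequality.} Your relaxation step is correct, and so is the bookkeeping $\rho^2=r^2t$, $L(P)^2/L(R_N)^2=t$, and
\[
T(R_N)-T(P)\ge 2r^4\Bigl[Nt^2g(a_*)-\sum_i g(a_i)+\tfrac{Na_*}{8}\,t(t-1)\Bigr].
\]
The way you close the estimate fails. You require $g(a_*)\ge 0$, i.e.\ $h(a_*)\ge a_*/8$, and this is false for every $a>0$. Discard the $y$-derivative and solve the one-dimensional problem on each horizontal slice $\{y/a<x<1\}$ of $D_a$, with Dirichlet data at $x=1$ and a free end at $x=y/a$. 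This gives $h(a)\le\int_0^a\frac13(1-y/a)^3\,\dd y=\frac a{12}$, so $g(a)\le-\frac a{24}<0$.

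Consequences:
\begin{itemize}
\item For $t>1$ your inequality $t^2g(a_*)\ge g(a_*)$ runs the wrong way.
\item No better test function can rescue it.
\item The fallback through concavity and $\varphi(0^+)$ also fails, since $\varphi(\alpha)\le-\frac1{24}\tan\alpha<0$.
\end{itemize}
The repair is not to split the two error terms:
\[
Nt^2g(a_*)+\tfrac{Na_*}{8}t(t-1)=Ng(a_*)+N(t-1)\Bigl[(t+1)h(a_*)-\tfrac{a_*}{8}\Bigr].
\]
The bound $h(a_*)\ge a_*/16$, which you already get from the trial function $(1-x^2)/4$, makes the bracket at least $\frac{a_*}{16}(t-1)$. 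This yields exactly the perimeter term $\frac{Na_*}{16}(t-1)^2$, and it is how the paper argues.

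\textbf{Gap 2: the concavity lemma is not proved.} The more substantial gap is the strict concavity of $\varphi(\alpha)=h(\tan\alpha)-\frac18\tan\alpha$, uniform on compact $J$. This is the real content of the theorem, and your proposal leaves it as an unspecified plan. A Hadamard formula gives only $h'(a)$, and nothing in the plan explains why subtracting precisely $\frac18\tan\alpha$ produces concavity.

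The paper's mechanism is structural:
\begin{itemize}
\item The substitution $y=at$ moves the problem to the fixed triangle $D=\{0<t<x<1\}$, where $h(a)=\sup_{v}\{2aI(v)-aX(v)-a^{-1}Y(v)\}$. So $a$ enters only as the coefficients of two fixed quadratic forms.
\item Completing the square with the explicit $X$-Riesz representer $w=\frac12[(1-t)^2-(x-t)^2]$ gives $h(a)=\frac a{12}-m(a)$.
\item After projecting out the kernel of $Y$, a Galerkin (or spectral) diagonalization yields $m(a)=\int\frac{a\lambda}{a^2+\lambda}\,\dd\mu(\lambda)$, with moments $\int\dd\mu=\frac1{48}$ and $\int\lambda^{-1}\dd\mu\le\frac1{48}$. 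The second moment bound comes from the divergence-free identity $X(w_1,\varphi)=-\frac12\int_Dt\varphi_t$.
\item Differentiating twice in $\alpha$ reduces concavity to the pointwise inequality $K_x(\lambda)<4+\frac6{1+x}(\lambda^{-1}-1)$. Integrated against $\mu$, it gives $\int K_{a^2}\dd\mu\le\frac1{12}$ precisely because of those two moments.
\item The uniform gap in this kernel inequality on compact $x$-ranges produces the constant $c_J$.
\end{itemize}
Without an argument of this kind, neither your Jensen step nor your stability estimate is established.
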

	
	The sign of the angular term is completely explicit. Indeed, by the strict concavity of 
	\[
	\alpha \mapsto h(\tan \alpha)-\frac{1}{8}\tan \alpha
	\]
	proved in Theorem \ref{thm:concavity} and $\sum_i\alpha_i=\pi$, Jensen's inequality gives
	\begin{equation}\label{eq:intro-angular-deficit}
		\sum_{i=1}^N\left(h(a_i)-\frac{a_i}{8}\right)
		\le
		N\left(h(a_*)-\frac{a_*}{8}\right),
	\end{equation}
	with equality if and only if $\alpha_i=\pi/N$ for every $i$. Thus the first difference in \eqref{eq:main-deficit} is a genuine nonnegative angular deficit.
	
	Discarding this angular deficit gives a particularly transparent consequence of Theorem \ref{thm:main}.
	\begin{corollary}\label{cor:perimeter-deficit}
		Under the assumptions of Theorem \ref{thm:main},
		\begin{equation}\label{eq:perimeter-deficit-r}
			T(R_N)-T(P)
			\ge \frac{Na_*r^4}{8}
			\left(\frac{L(P)^2}{L(R_N)^2}-1\right)^2.
		\end{equation}
		Equivalently, if $A=|P|=|R_N|$, then
		\begin{equation}\label{eq:perimeter-deficit-area}
			T(R_N)-T(P)
			\ge
			\frac{A^2}{8N\tan(\pi/N)}
			\left(1-\frac{L(R_N)^2}{L(P)^2}\right)^2.
		\end{equation}
		In either form, the right-hand side is positive unless $P$ is regular.
	\end{corollary}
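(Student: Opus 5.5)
The corollary should follow from Theorem~\ref{thm:main} by discarding the angular deficit and then rewriting $r$ in terms of $A$, $N$ and the perimeters.

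\emph{First form.} By the strict concavity of $\alpha\mapsto h(\tan\alpha)-\tfrac18\tan\alpha$ (Theorem~\ref{thm:concavity}) and Jensen's inequality \eqref{eq:intro-angular-deficit}, the difference
\[
N\left(h(a_*)-\tfrac{a_*}{8}\right)-\sum_{i=1}^N\left(h(a_i)-\tfrac{a_i}{8}\right)
\]
is nonnegative. Dropping it from \eqref{eq:main-deficit} leaves
\[
T(R_N)-T(P)\ge 2r^4\cdot\frac{Na_*}{16}\left(\frac{L(P)^2}{L(R_N)^2}-1\right)^2 ,
\]
which is exactly \eqref{eq:perimeter-deficit-r}.

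\emph{Second form.} For a tangential polygon, decomposing $P$ from the incenter into triangles gives $A=\tfrac12 rL(P)$, so $r=2A/L(P)$. For the regular $N$-gon with inradius $\rho$ we have $L(R_N)=2N\rho a_*$ and $A=N\rho^2a_*$. Hence
\[
L(R_N)^2=4N^2\rho^2a_*^2=4Na_*A ,
\]
which gives
\[
r^4=\frac{16A^4}{L(P)^4}=\frac{16A^4}{L(R_N)^4}\cdot\frac{L(R_N)^4}{L(P)^4}=\frac{A^2}{N^2a_*^2}\cdot\frac{L(R_N)^4}{L(P)^4}.
\]
Substituting into \eqref{eq:perimeter-deficit-r}:
\[
\frac{Na_*r^4}{8}\left(\frac{L(P)^2}{L(R_N)^2}-1\right)^2=\frac{A^2}{8Na_*}\left(\frac{L(R_N)^2}{L(P)^2}\right)^2\left(\frac{L(P)^2}{L(R_N)^2}-1\right)^2=\frac{A^2}{8Na_*}\left(1-\frac{L(R_N)^2}{L(P)^2}\right)^2 .
\]
Since $a_*=\tan(\pi/N)$, this is \eqref{eq:perimeter-deficit-area}.

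\emph{Positivity.} The right-hand side vanishes only if $L(P)=L(R_N)$. By Theorem~\ref{thm:main}, the perimeter term vanishes only when $P$ is regular, which gives the final assertion. Directly: $L(P)^2=4A^2/r^2$ and $A=r^2\sum_i a_i$, so $L(P)^2=4A\sum_i a_i$. Because $\tan$ is strictly convex on $(0,\pi/2)$ and $\sum_i\alpha_i=\pi$, Jensen gives $\sum_i a_i\ge Na_*$, with equality only if all $\alpha_i=\pi/N$. Combined with $L(R_N)^2=4Na_*A$, this yields $L(P)\ge L(R_N)$, with equality only for regular $P$.

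No step is a real obstacle. The only care needed is the algebra converting $r^4$ into the area form, using $A=\tfrac12 rL$ for tangential polygons.
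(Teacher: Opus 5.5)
Your proof is correct and follows the paper's route: you drop the nonnegative angular deficit from \eqref{eq:main-deficit} and then rewrite $r^4$ in area form, which the paper does via $A=r^2Na_*z$ with $z=L(P)^2/L(R_N)^2$. This is the same algebra as your use of $A=\tfrac12 rL(P)$ and $L(R_N)^2=4Na_*A$. Your positivity argument (Jensen for $\tan$) is also the paper's, with one small step, supplied in the proof of Theorem~\ref{thm:main}: if $\alpha_i=\pi/N$ for all $i$, then every side equals $r(a_i+a_{i+1})=2ra_*$, so $P$ is regular.
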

	The second form contains no torsion function or auxiliary cell quantity: once the area is fixed, the lower bound depends only on the perimeter ratio. In particular, the unique-maximizer statement below already follows from this perimeter deficit alone.
	
	\begin{corollary}[P\'olya--Szeg\H{o} theorem for torsional rigidity over tangential polygons]\label{cor:extremal}
		Among tangential $N$-gons of prescribed area, the regular $N$-gon uniquely maximizes torsional rigidity. In particular, among all triangles of prescribed area, the equilateral triangle is the unique maximizer.
	\end{corollary}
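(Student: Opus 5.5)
The statement to prove is Corollary~\ref{cor:extremal}. The plan is to read it off from Corollary~\ref{cor:perimeter-deficit}, which is already established. No new analysis on the cells $D_a$ is needed.

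\emph{Step 1: $T(R_N)\ge T(P)$.} Fix the area $A>0$ and let $P$ be any tangential $N$-gon with $|P|=A$. The right-hand side of \eqref{eq:perimeter-deficit-area} is a square multiplied by the positive constant $A^2/(8N\tan(\pi/N))$, so it is nonnegative. Hence $T(R_N)\ge T(P)$, and the regular $N$-gon of area $A$ is a maximizer. Note that $R_N$ is itself tangential, so it belongs to the competing class.

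\emph{Step 2: uniqueness.} Suppose $T(P)=T(R_N)$. Then \eqref{eq:perimeter-deficit-area} forces $L(P)=L(R_N)$. At this point I will use the last sentence of Corollary~\ref{cor:perimeter-deficit}, which rests on Theorem~\ref{thm:main}. That sentence says the perimeter term vanishes only when $P$ is regular, so $P$ is a regular $N$-gon of area $A$. It is therefore congruent to $R_N$. This is the step that carries content.

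The underlying fact, if one wants to see it directly, is the following. For a tangential polygon, $A=rL/2$ and $L=2r\sum_i\cot\alpha_i$. Therefore
\[
\frac{L^2}{A}=4\sum_{i=1}^N\cot\alpha_i .
\]
Since $\cot$ is strictly convex on $(0,\pi/2)$ and $\sum_i\alpha_i=\pi$, Jensen's inequality gives
\[
\sum_{i=1}^N\cot\alpha_i\ge N\cot(\pi/N),
\]
with equality iff every $\alpha_i=\pi/N$. Equal angles together with a common incircle give a regular polygon.

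\emph{Step 3: the triangle case.} Every triangle has an incircle, so for $N=3$ the tangential class is the class of all triangles. Steps 1 and 2 then show that the equilateral triangle is the unique maximizer among all triangles of prescribed area.

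I expect no real obstacle here. The only point needing care is the equality characterization in Step 2, and the Jensen argument above settles it.
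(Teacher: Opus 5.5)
Your proof is correct and follows the paper's route. The corollary is read off from the perimeter deficit \eqref{eq:perimeter-deficit-area}, and uniqueness comes from the equality characterization $z=L(P)^2/L(R_N)^2=1$ if and only if $\alpha_i=\pi/N$ for every $i$, if and only if $P$ is regular, which is established in the proof of Theorem~\ref{thm:main}.

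There is one slip in your optional direct check. With the convention $\sum_i\alpha_i=\pi$ (interior angles $\pi-2\alpha_i$), the tangent lengths are $r\tan\alpha_i$, not $r\cot\alpha_i$. So $L=2r\sum_i\tan\alpha_i$ and $L^2/A=4\sum_i\tan\alpha_i$, and Jensen should be applied to the strictly convex function $\tan$; this gives the bound $4N\tan(\pi/N)=L(R_N)^2/A$. With $\cot$ as you wrote it, the Jensen bound $4N\cot(\pi/N)$ is not $L(R_N)^2/A$; for $N=3$ it equals $4\sqrt3<4\pi$. Then $L(P)=L(R_N)$ would not put you in the equality case, and that aside would not close as written.
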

	
	The perimeter deficit has the following consequence that is invisible in the qualitative inequality. Let $R_N$ and $R_{N+1}$ be regular polygons of the same area $A$, and let $r_N$ be the inradius of $R_N$.
	
	\begin{corollary}\label{cor:regular-monotonicity}
		For every $N\ge3$,
		\begin{equation}\label{eq:intro-monotonicity}
			\begin{aligned}
				T(R_{N+1})-T(R_N)
				\ge{}&\frac{r_N^4(N+1)\tan(\pi/(N+1))}{8}\\
				&\times\left(
				\frac{N\tan(\pi/N)}{(N+1)\tan(\pi/(N+1))}-1
				\right)^2>0.
			\end{aligned}
		\end{equation}
	\end{corollary}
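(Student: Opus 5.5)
The plan is to apply Corollary~\ref{cor:perimeter-deficit} to the case where $P=R_N$, viewed as a tangential $(N+1)$-gon. That literal substitution fails, because $R_N$ is not an $(N+1)$-gon. The way around this is to regard $R_N$ as a degenerate tangential $(N+1)$-gon, obtained by inserting a fictitious vertex with $\alpha_{N+1}\to0$ on one side. It is cleaner, though, to avoid degeneration and use a limiting argument. Take tangential $(N+1)$-gons $P_\varepsilon$ with the same incircle as $R_N$, formed by cutting one vertex of $R_N$ with a tangent line to the incircle near that vertex. Then $P_\varepsilon\subset R_N$, and $P_\varepsilon\to R_N$ in Hausdorff distance, in area and in perimeter. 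Torsional rigidity is continuous under this convergence, for instance by monotonicity $T(P_\varepsilon)\le T(R_N)$ together with $H^1_0$ approximation, or by continuity of $T$ under Hausdorff convergence of convex sets. Also, $r(P_\varepsilon)=r_N$ for every $\varepsilon$.

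Apply \eqref{eq:perimeter-deficit-r} with $N+1$ in place of $N$ to $P_\varepsilon$, comparing with the regular $(N+1)$-gon $R^\varepsilon_{N+1}$ of area $|P_\varepsilon|$:
\[
T(R^\varepsilon_{N+1})-T(P_\varepsilon)\ge\frac{(N+1)\tan(\pi/(N+1))\,r_N^4}{8}\left(\frac{L(P_\varepsilon)^2}{L(R^\varepsilon_{N+1})^2}-1\right)^2 .
\]
For a tangential polygon $L=2A/r$ and $A=r^2\sum\tan\alpha_i$, so
\[
\frac{L(P)^2}{L(R)^2}=\frac{A^2/r(P)^2}{A^2/r(R)^2}=\frac{r(R)^2}{r(P)^2}.
\]
For the regular polygons, $A=Nr_N^2\tan(\pi/N)=(N+1)r_{N+1}^2\tan(\pi/(N+1))$. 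Letting $\varepsilon\to0$ therefore gives
\[
\frac{L(R_N)^2}{L(R_{N+1})^2}=\frac{r_{N+1}^2}{r_N^2}=\frac{N\tan(\pi/N)}{(N+1)\tan(\pi/(N+1))}.
\]
In the limit, $T(R^\varepsilon_{N+1})\to T(R_{N+1})$ by scaling, since $T$ scales like area squared. Together with $T(P_\varepsilon)\to T(R_N)$, this yields exactly \eqref{eq:intro-monotonicity}.

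Strict positivity reduces to the strict monotonicity of $n\mapsto n\tan(\pi/n)$, which is decreasing. Indeed, $x\mapsto\tan x/x$ is strictly increasing on $(0,\pi/2)$, so $N\tan(\pi/N)=\pi\,\frac{\tan(\pi/N)}{\pi/N}$ strictly exceeds the corresponding $N+1$ quantity. Hence the ratio exceeds $1$.

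The main obstacle is justifying the limit. Corollary~\ref{cor:perimeter-deficit} is stated only for genuine tangential $N$-gons. One must check that the constant on the right does not depend on the polygon beyond $r$ and the perimeter ratio, which holds here, and that $T$ is continuous along $P_\varepsilon\to R_N$. For the latter, I would use $P_\varepsilon\subset R_N$ for $\varepsilon$ small (so $T(P_\varepsilon)\le T(R_N)$), together with the fact that $P_\varepsilon$ contains $R_N$ minus a small corner neighbourhood. Choosing a test function equal to $u_{R_N}$ cut off near that corner in \eqref{eq:variational-intro}, and using $H^1$-capacity of a shrinking corner tending to zero, gives $\liminf T(P_\varepsilon)\ge T(R_N)$.
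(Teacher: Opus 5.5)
Your proof is correct and follows essentially the same route as the paper. You approximate $R_N$ by degenerate tangential $(N+1)$-gons, apply Corollary~\ref{cor:perimeter-deficit} with $N+1$ in place of $N$, and pass to the limit. The only difference is bookkeeping: you fix the incircle and rescale the comparison $(N+1)$-gon, whereas the paper fixes the area $A$ and tracks $r_\varepsilon\to r_N$. The two are equivalent via $L=2|P|/r$.

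One correction concerns your continuity step. The two tangent lengths from each new vertex are equal, so a line tangent to the incircle can never cut off a small neighbourhood of the vertex $V$. As the removed area tends to zero, the removed triangle degenerates to the segment joining $V$ to an adjacent tangency point. It is a thin sliver of fixed length, not a shrinking corner, so the ``shrinking corner capacity'' argument does not apply as stated.

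The lower bound still follows directly. Dilating about the incenter, the sandwich $(1-\delta)R_N\subset P_\varepsilon\subset R_N$ holds for small $\varepsilon$, because the sliver converges to a subset of $\partial R_N$. Combined with monotonicity and $T(tR_N)=t^4T(R_N)$, this gives the bound exactly as in the paper. Alternatively, use the Hausdorff continuity of $T$ for convex domains that you also cite.
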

	This gives a really short alternative proof of the strict monotonicity previously obtained in \cite{GuiHuLiZhang2026} by a Schwarz--Christoffel/Bergman analytic-content series argument. The proof here avoids the rather complicated computations in \cite{GuiHuLiZhang2026}.
	
	The same quantitative deficit also interacts naturally with spectral inequalities. The polygonal Faber--Krahn conjecture is known for $N=3,4$ and remains open for $N\ge5$; see \cite{HenrotBook,LaugesenSiudeja2017,BogoselBucur2024}. Recent work has established local minimality of the regular pentagon and hexagon by validated computation \cite{BogoselBucurValidated2024}. In Section \ref{sec:eigenvalue}, we combine \eqref{eq:main-deficit} with the Kohler--Jobin inequality to obtain an explicit sufficient condition for
	\[
	\lambda_1(P)>\lambda_1(R_N).
	\]
	As $N\to\infty$, this confines every possible counterexample within the tangential class to a relative perimeter layer $L(P)/L(R_N)-1=O(N^{-2})$. For context, the monotonicity of $\lambda_1(R_N)$ itself was conjectured by Antunes and Freitas \cite{AntunesFreitas2006}; Nitsch obtained comparison estimates \cite{Nitsch2014}, and Berghaus--Georgiev--Monien--Radchenko derived high-order asymptotic expansions \cite{BerghausGeorgievMonienRadchenko2024}. Dahne, G\'omez-Serrano, and Pech-Alberich recently proved the strict monotonicity for every $N\ge3$ \cite{DahneGomezSerranoPechAlberich2026}.
	
	The proof of Theorem \ref{thm:main} has two ingredients. First, joining the incenter to the vertices and tangency points decomposes $P$ into $2N$ right triangles. Dropping the transmission conditions across the internal cuts gives a mixed Dirichlet--Neumann relaxation, exact for the regular polygon by reflection symmetry. Second, we prove the strict concavity of
	\[
	\alpha\longmapsto h(\tan\alpha)-\frac18\tan\alpha
	\qquad\left(0<\alpha<\frac\pi2\right),
	\]
	with a uniform concavity modulus on compact subintervals, together with the sharp lower bound $h(a)\ge a/16$. The quantitative estimate \eqref{eq:main-deficit} then follows directly from Jensen's inequality and the fixed-area relation. The first term measures angular asymmetry; the second measures perimeter excess.
	
	The paper is organized as follows. Section \ref{sec:cells} establishes the mixed-cell relaxation. Section \ref{sec:concavity} proves the angular concavity estimate by Galerkin approximation and records an equivalent spectral-measure formulation. Section \ref{sec:mainproof} proves Theorem \ref{thm:main}. Section \ref{sec:monotonicity} proves Corollary \ref{cor:regular-monotonicity}, and Section \ref{sec:eigenvalue} gives the spectral consequences.
	
	\section{Tangential polygons and mixed cells}\label{sec:cells}
	
	Let $P$ be a tangential $N$-gon with incenter $O$ and inradius $r$. For each vertex, write its interior angle as $\pi-2\alpha_i$, where $0<\alpha_i<\pi/2$. Since the exterior angles sum to $2\pi$,
	\begin{equation}\label{eq:alphasum}
		\sum_{i=1}^N\alpha_i=\pi.
	\end{equation}
	The two tangent segments from the $i$th vertex to the incircle have common length $r\tan\alpha_i$. Hence, with
	\begin{equation}\label{eq:aiS}
		a_i=\tan\alpha_i,
		\qquad
		S=\sum_{i=1}^Na_i,
	\end{equation}
	we have
	\begin{equation}\label{eq:area-perimeter}
		|P|=r^2S,
		\qquad
		L(P)=2rS.
	\end{equation}
	
	For $a>0$, let
	\[
	D_a=\{(x,y)\in\R^2:0<x<1,\ 0<y<ax\}.
	\]
	Define
	\begin{equation}\label{eq:hdef}
		h(a)=\max_{\substack{v\in H^1(D_a)\\ \Tr v=0\text{ on }\{1\}\times(0,a)}}
		\left\{2\int_{D_a}v\,\dd x\dd y-\int_{D_a}|\nabla v|^2\,\dd x\dd y\right\}.
	\end{equation}
	The Euler--Lagrange equation has a Dirichlet condition on the vertical side and homogeneous Neumann conditions on the other two sides. By scaling, the corresponding functional on $rD_a$ equals $r^4h(a)$.
	
	Joining $O$ to every vertex and every tangency point decomposes $P$ into $2N$ right triangles. The two cells adjacent to the $i$th vertex are congruent to $rD_{a_i}$.
	
	\begin{proposition}[Mixed-cell relaxation]\label{prop:relax}
		Every tangential $N$-gon satisfies
		\begin{equation}\label{eq:relax}
			T(P)\le2r^4\sum_{i=1}^Nh(a_i).
		\end{equation}
		For a regular polygon, equality holds.
	\end{proposition}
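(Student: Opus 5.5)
We argue variationally, using the formulation \eqref{eq:variational-intro} of $T(P)$ and restricting the maximizer to each of the $2N$ right-triangular cells. Let $u=u_P\in H_0^1(P)$ be the torsion function. Then
\[
T(P)=2\int_P u\,\dd x-\int_P|\nabla u|^2\,\dd x .
\]
Both integrals are additive over the decomposition of $P$ into the $2N$ cells $C_{i}^{\pm}$ obtained by joining $O$ to the vertices and the tangency points. The internal segments have measure zero, so
\[
T(P)=\sum_{i=1}^N\sum_{\pm}\left(2\int_{C_i^\pm}u-\int_{C_i^\pm}|\nabla u|^2\right).
\]

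Each cell $C_i^\pm$ is congruent to $rD_{a_i}$ by a rigid motion, possibly composed with a reflection. This motion sends the side lying on $\partial P$, namely the tangent segment of length $r a_i$ from the tangency point to the vertex, to the side $\{r\}\times(0,ra_i)$. The leg from $O$ to the tangency point has length $r$ and goes to the horizontal leg. The segment from $O$ to the vertex goes to the hypotenuse $y=a_ix$. Under this identification, the restriction $u|_{C_i^\pm}$ is an $H^1$ function on the cell whose trace vanishes on the vertical (boundary) side. No condition is imposed on the two internal sides. It is therefore an admissible competitor in the scaled version of \eqref{eq:hdef}, and each cell contribution is at most the maximum over that admissible class. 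By the scaling remark preceding the proposition, this maximum equals $r^4h(a_i)$. Summing over the two cells at each vertex gives \eqref{eq:relax}. The only points to check are that the trace of an $H_0^1(P)$ function restricted to a Lipschitz subdomain vanishes on the part of the subdomain's boundary contained in $\partial P$, and that the functional is invariant under isometries. Both are standard.

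For the equality case, let $P$ be regular, so that $a_i=a_*$ for all $i$. Let $w$ be the maximizer for $h(a_*)$ on $rD_{a_*}$; it exists and is unique by strict concavity of the functional. We build $W$ on $P$ by reflecting $w$ successively across the cell sides through $O$. Since all $2N$ cells are congruent and adjacent cells are mirror images across their common side, the reflections are consistent all the way around the polygon. The reflected function is continuous across each internal side, so $W\in H^1(P)$. It vanishes on $\partial P$, so $W\in H_0^1(P)$.

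It remains to show that $W$ solves $-\Delta W=1$ weakly in $P$. Inside each cell this holds by the Euler--Lagrange equation. Across an internal side, the homogeneous Neumann condition for $w$ together with the even reflection gives matching normal derivatives. In weak form: for any test function $\varphi\in C_c^\infty(P)$, split $\int_P\nabla W\cdot\nabla\varphi$ over the cells. On each cell the test function $\varphi$ is admissible for the mixed problem, since it vanishes near the Dirichlet side. The weak mixed Euler--Lagrange identity $\int_C\nabla W\cdot\nabla\varphi=\int_C\varphi$ then holds cellwise, with no extra boundary terms. Summing gives $\int_P\nabla W\cdot\nabla\varphi=\int_P\varphi$, so $W=u_P$. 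Hence $T(P)=2\int_P W-\int_P|\nabla W|^2=2Nr^4h(a_*)$, which is equality.

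This equality step is the only non-routine point. The cleanest route is the weak formulation just described, which avoids any regularity near the corners and the incenter.
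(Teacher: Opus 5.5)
Your proposal is correct and follows the paper's proof. The inequality is the same relaxation of the interface matching conditions (you evaluate the cell functionals at the restrictions of $u_P$), and your equality argument is the paper's ``equivalently'' formulation, in which the mixed solution on one cell is extended evenly across the cuts and identified with $u_P$, with your cellwise weak Euler--Lagrange check supplying the details the paper leaves implicit; the one small imprecision is that existence of the cell maximizer comes from coercivity (the Poincar\'e bound \eqref{eq:poincare} provided by the Dirichlet side), whereas strict concavity gives only uniqueness.
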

	
	\begin{proof}
		The restriction of a function in $H_0^1(P)$ to each of the $2N$ cells vanishes on the part of the cell boundary lying in $\partial P$, and the restrictions have matching traces on the internal cuts. Removing these matching conditions enlarges the admissible space in \eqref{eq:variational-intro} to the direct product of the mixed Sobolev spaces on the cells. The resulting maximum is the sum of the cell maxima, which gives \eqref{eq:relax}.
		
		If $P$ is regular, every internal cut is contained in a reflection axis. By uniqueness, the torsion function is invariant under the corresponding reflection. Hence its weak normal derivative vanishes on each cut, and its restrictions solve the mixed problems on the cells. Equivalently, the mixed solution on one cell extends evenly across the cuts to the Dirichlet torsion function of the regular polygon. Thus the relaxed maximum is attained.
	\end{proof}
	
	\section{Strict angular concavity}\label{sec:concavity}
	
	The following angular concavity result is crucial in obtaining the quantitative P\'olya-Szeg\H{o} theorem for torsional rigidity.
	
	\begin{theorem}\label{thm:concavity}
		The map
		\[
		\alpha\longmapsto h(\tan\alpha)-\frac18\tan\alpha
		\]
		is strictly concave on $(0,\pi/2)$. More precisely, for every compact interval $J\Subset(0,\pi/2)$ there is $c_J>0$ such that
		\begin{align}\label{eq:strong-concavity}
			h\bigl(\tan(\vartheta\alpha+(1-\vartheta)\beta)\bigr)
			-\frac18\tan(\vartheta\alpha+(1-\vartheta)\beta)
			&\ge \vartheta\left(h(\tan\alpha)-\frac18\tan\alpha\right)\notag\\
			&\quad +(1-\vartheta)\left(h(\tan\beta)-\frac18\tan\beta\right)\notag\\
			&\quad+\frac{c_J}{2}\vartheta(1-\vartheta)(\alpha-\beta)^2
		\end{align}
		for all $\alpha,\beta\in J$ and $\vartheta\in[0,1]$. Moreover,
		\begin{equation}\label{eq:h-lower}
			h(a)\ge\frac a{16}
			\qquad(a>0).
		\end{equation}
	\end{theorem}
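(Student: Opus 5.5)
The lower bound \eqref{eq:h-lower} is the easy part, and I would dispose of it first with a one-dimensional test function in \eqref{eq:hdef}. Take $v=c(1-x^2)$, which vanishes on $\{1\}\times(0,a)$; the Neumann conditions are natural and impose no constraint. Then
\[
2\int_{D_a}v=2c\int_0^1(1-x^2)\,ax\,\dd x=\frac{ca}{2},\qquad \int_{D_a}|\nabla v|^2=c^2\int_0^1 4x^2\,ax\,\dd x=c^2a,
\]
so the functional equals $ca/2-c^2a$. Its maximum over $c$ is attained at $c=1/4$ and equals $a/16$, which gives $h(a)\ge a/16$.

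For the concavity, I would first isolate the term $a/8$ by splitting off a particular solution. The function $w=(1-x^2)/2$ satisfies $-\Delta w=1$ in $D_a$, $w=0$ on $x=1$ and $\partial_y w=0$ on $y=0$, and $\int_{D_a}w=a/8$. Writing $u=w+\varphi$ for the mixed torsion function, the corrector $\varphi$ is harmonic, vanishes on $x=1$, is Neumann on $y=0$, and on the hypotenuse satisfies $\partial_n\varphi=-\partial_n w=x\,n_x$. Integrating by parts expresses $h(a)-a/8=\int_{D_a}\varphi$ as a quadratic boundary functional of this data, namely the extremal value of a variational problem $\pm\bigl(2\int_{\text{hyp}}g\psi-\int|\nabla\psi|^2\bigr)$ with $g=x\,n_x$; I would fix the sign carefully. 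Next I would pull everything back to a fixed reference domain. In polar coordinates $(\rho,\theta)$ with $\theta=\alpha s$, $s\in(0,1)$, and $\rho=\sigma\sec(\alpha s)$, $\sigma\in(0,1)$, the dependence on $\alpha$ moves entirely into the coefficients of the quadratic form and into the data.

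To obtain concavity I would use the complementary (Thomson-type) formulation, which writes the quantity as an \emph{infimum} over admissible fluxes. Transported to the reference domain, each competitor then contributes a function of $\alpha$, and an infimum of concave functions is concave. The task is therefore to show that, for each fixed reference flux, the transported energy minus the explicit $\frac18\tan\alpha$ part is concave in $\alpha$. If this direct route fails because the coefficients involving $\sec(\alpha s)$ are not concave, I would fall back on the Galerkin approach suggested in the introduction. Expand the corrector in the Neumann eigenbasis of the angular variable on the reference sector. Then $h(\tan\alpha)-\frac18\tan\alpha$ becomes a series $\sum_k c_k(\alpha)^2/\mu_k(\alpha)$, which can be rewritten as an integral against a spectral measure. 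I would then differentiate twice in $\alpha$ and show that each term, or the combined measure, has a negative second derivative.

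Once the second derivative $\frac{d^2}{d\alpha^2}\bigl(h(\tan\alpha)-\frac18\tan\alpha\bigr)$ is shown to be strictly negative and continuous on $(0,\pi/2)$, the uniform estimate \eqref{eq:strong-concavity} follows. On a compact interval $J$ the second derivative is bounded above by $-c_J<0$, and integrating along the segment between $\alpha$ and $\beta$ gives the stated modulus. Continuity itself would come from the analytic dependence of the transported problem on $\alpha$. The main obstacle is the sign of the second variation: the boundary data $x\,n_x$ and the domain both vary with $\alpha$, so cancellations are delicate, especially near the endpoints $\alpha\to0$ and $\alpha\to\pi/2$ where the cell degenerates. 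There I would first check the asymptotics of $h$ explicitly, using $h(a)\approx a/8$ as $a\to0$, to make sure strictness is not lost.
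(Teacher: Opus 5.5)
Your proof of $h(a)\ge a/16$ is correct and matches the paper's: both use the trial function $(1-x^2)/4$. The concavity part, however, has a genuine gap, because neither of your routes contains the mechanism that produces the sign.

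\textbf{The Thomson route.} It rests entirely on the unproved claim that each fixed transported flux gives a concave function of $\alpha$ once $\frac18\tan\alpha$ is subtracted. For the natural transport this claim is false. Pull back by $y=at$ to the fixed triangle $D$. A flux $\tau$ with $\mathrm{div}\,\tau=-1$ and zero normal component on $t=0$ and $t=x$ then contributes $(A-\frac18)\tan\alpha+B\tan^3\alpha$, where $A=\int_D\tau_1^2$ and $B=\int_D\tau_2^2$. Its second $\alpha$-derivative is $2a(1+a^2)\bigl(A-\frac18+3B+6Ba^2\bigr)$, which becomes positive as $a\to\infty$ whenever $B>0$.

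The condition $B=0$ forces $\tau=(t-x,0)$, and that flux only yields $h(a)\le a/12$. So every other competitor fails your requirement, including every optimal one. Your polar pull-back $\rho=\sigma\sec(\alpha s)$ makes the $\alpha$-dependence more entangled, and you give no computation suggesting it behaves better.

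\textbf{The fallback.} In the polar coordinates the pulled-back form has $\alpha$-dependent mixed coefficients, so angular Neumann modes do not decouple it. A representation $\sum_k c_k(\alpha)^2/\mu_k(\alpha)$ with $\alpha$-dependent spectral data gives no control of the second derivative.

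\textbf{What is missing.} The paper's argument has three ingredients, none of which appears in your proposal.

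First, the linear stretch $y=at$ makes $\alpha$ enter only through the scalars $a$ and $a^{-1}$ multiplying two fixed forms $X$ and $Y$. Completing the square with $w=\frac{(1-t)^2-(x-t)^2}{2}$ and removing the zero mode $\cK=\ker Y$ gives $h(a)=\frac a{12}-\int\frac{a\lambda}{a^2+\lambda}\,\dd\mu(\lambda)$. Here $\mu$ is a single measure of mass $\frac1{48}$, independent of $\alpha$.

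Second, even with this representation, pointwise negativity fails. The second derivative of $h(\tan\alpha)-\frac18\tan\alpha$ equals $a(1+a^2)\int\bigl(K_{a^2}(\lambda)-4\bigr)\,\dd\mu(\lambda)$. But $K_x(\lambda)\to 6/\lambda-2>4$ as $x\to0$ whenever $\lambda<1$, so ``each term negative'' is false.

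Third, what closes the argument is the moment constraint $\int\lambda^{-1}\,\dd\mu\le\int\dd\mu=\frac1{48}$. It comes from the sharp dual bound $|X(w_1,\varphi)|^2\le\frac1{48}Y(\varphi)$, proved via the divergence-free field $(t-\frac x2,\frac t2)$. This is combined with the pointwise inequality $K_x(\lambda)<4+\frac6{1+x}\bigl(\lambda^{-1}-1\bigr)$.

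Without these ingredients, ``show the combined measure has negative second derivative'' merely restates the theorem.

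\textbf{Your asymptotic check is also wrong.} The flux $(y/a-x,0)$ on $D_a$ shows $h(a)\le a/12$ for every $a$. Hence $h(\tan\alpha)-\frac18\tan\alpha\le-\frac1{24}\tan\alpha<0$. In fact $h(a)\sim a/16$ as $a\to0$, so your own test function is asymptotically optimal. Your corrector $\varphi$ is therefore of the same size as $w$, not a small perturbation.
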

	
	The proof requires several preparatory steps.
	
	\subsection*{Step 1: Fixed-domain formulation and completion of the square}
	
	It is convenient to represent $h(a)$ as a maximization problem for a quadratic functional over a fixed domain.
	
	Set
	\begin{equation}\label{eq:Dfixed}
		D=\{(x,t)\in\R^2:0<t<x<1\},
		\qquad
		V=\{v\in H^1(D):\Tr v=0\text{ on }\{x=1\}\}.
	\end{equation}
	For $u,v\in V$, write
	\begin{equation}\label{eq:forms}
		I(v)=\int_Dv,
		\qquad
		X(u,v)=\int_Du_xv_x,
		\qquad
		Y(u,v)=\int_Du_tv_t,
	\end{equation}
	and abbreviate $X(v)=X(v,v)$ and $Y(v)=Y(v,v)$. The change of variables $y=at$ immediately implies the following formula:
	\begin{equation}\label{eq:hfixed}
		h(a)=\sup_{v\in V}\{2aI(v)-aX(v)-a^{-1}Y(v)\}.
	\end{equation}
	
	We can further simplify \eqref{eq:hfixed}. Note that for almost every $t$ and $t<x<1$,
	\[
	v(x,t)=-\int_x^1v_x(s,t)\,\dd s.
	\]
	Consequently,
	\begin{equation}\label{eq:poincare}
		\|v\|_{L^2(D)}^2\le X(v).
	\end{equation}
	Thus $X$ is an inner product on $V$, $I$ is $X$-continuous, and the norm $(X+Y)^{1/2}$ is equivalent to the usual $H^1(D)$ norm. By the Riesz representation theorem on the $X$-completion of $V$, there is a unique $w$ such that
	\begin{equation}\label{eq:Riesz}
		X(w,\varphi)=I(\varphi)
		\qquad(\forall \varphi\in V).
	\end{equation}
	In fact, $w\in V$ and is given explicitly by
	\begin{equation}\label{eq:wdef}
		w(x,t)=\frac{(1-t)^2-(x-t)^2}{2}.
	\end{equation}
	Indeed, $-w_{xx}=1$, $w(1,t)=0$, and $w_x(t,t)=0$, so integration by parts in $x$ gives \eqref{eq:Riesz}. In particular, direct computation yields
	\begin{equation}\label{eq:wenergy}
		I(w)=X(w)=\frac1{12}.
	\end{equation}
	Since for any $v\in V$,
	\begin{align*}
		X(v-w)=X(v)-2X(v,w)+X(w)=X(v)-2I(v)+\frac{1}{12},
	\end{align*}
	by \eqref{eq:hfixed}, we obtain
	\begin{equation}\label{eq:h-m}
		h(a)=\frac a{12}-m(a),
	\end{equation}
	where
	\begin{equation}\label{eq:mV}
		m(a)=\inf_{v\in V}\{aX(v-w)+a^{-1}Y(v)\}.
	\end{equation}
	This is the form used below.
	
	\subsection*{Step 2: Removing the zero mode}
	
	We next rewrite \eqref{eq:h-m} modulo the kernel of $Y$.
	
	Clearly, the kernel of $Y$ is
	\begin{equation}\label{eq:Kdef}
		\cK=\{k\in V:Y(k)=0\};
	\end{equation}
	its elements are precisely the functions depending only on $x$ and vanishing at $x=1$.
	
	To further simplify \eqref{eq:h-m}, we need the following lemma, which gives the projection formula for $v\in V$ onto $\mathcal{K}$.
	
	\begin{lemma}\label{lem:projection}
		For $v\in V$, define
		\begin{equation}\label{eq:PKdef}
			(P_{\cK}v)'(x)=\frac1x\int_0^xv_x(x,t)\,\dd t,
			\qquad
			(P_{\cK}v)(1)=0.
		\end{equation}
		Then $P_{\cK}v\in\cK$ and
		\begin{equation}\label{eq:PKorth}
			X(v-P_{\cK}v,k)=0
			\qquad(k\in\cK).
		\end{equation}
		Consequently,
		\begin{equation}\label{eq:decomp}
			V=\cK\oplus_X\cH,
			\qquad
			\cH:=\{q\in V:X(q,k)=0\text{ for all }k\in\cK\}.
		\end{equation}
	\end{lemma}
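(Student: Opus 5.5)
The plan is to verify, in order, that $P_{\cK}v$ is well defined and belongs to $\cK$, that it satisfies the orthogonality \eqref{eq:PKorth}, and then to read off the decomposition \eqref{eq:decomp} from these two facts.

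\emph{Membership in $\cK$.} Set $g(x)=\frac1x\int_0^x v_x(x,t)\,\dd t$, the average of $v_x(x,\cdot)$ over the vertical slice $\{x\}\times(0,x)$ of $D$. By Fubini, for a.e.\ $x$ the slice function $t\mapsto v_x(x,t)$ lies in $L^2(0,x)$. Cauchy--Schwarz gives $|g(x)|^2\le \frac1x\int_0^x |v_x(x,t)|^2\,\dd t$. Since the slice has length $x$, the weight $x$ in $\int_D k'(x)^2\,\dd x\dd t=\int_0^1 x\,k'(x)^2\,\dd x$ exactly cancels the $\frac1x$, and we obtain
\[
\int_0^1 x\,g(x)^2\,\dd x\le X(v)<\infty .
\]
Now define $k_v(x)=-\int_x^1 g(s)\,\dd s$. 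The function $(x,t)\mapsto k_v(x)$ has $t$-derivative zero, and $X(k_v)=\int_0^1 x g^2<\infty$. By \eqref{eq:poincare} it is in $L^2(D)$, so $k_v\in H^1(D)$. It vanishes at $x=1$ because $k_v$ is absolutely continuous on $[\varepsilon,1]$ for every $\varepsilon>0$, as $g\in L^2(\varepsilon,1)$. Hence $k_v\in V$ with $Y(k_v)=0$, i.e.\ $P_{\cK}v=k_v\in\cK$. I also need the description of $\cK$: $Y(k)=0$ forces $k_t=0$, so $k=k(x)$ on the connected domain $D$; for $k\in H^1(D)$ this means $\int_0^1 x\,k'^2<\infty$ and $k(1)=0$.

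\emph{Orthogonality.} For $k\in\cK$, Fubini gives
\[
X(v,k)=\int_0^1 k'(x)\int_0^x v_x(x,t)\,\dd t\,\dd x=\int_0^1 k'(x)\,x\,g(x)\,\dd x=X(P_{\cK}v,k).
\]
The integrals converge absolutely by the weighted Cauchy--Schwarz estimate above. This proves \eqref{eq:PKorth}.

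\emph{The decomposition.} Write $v=P_{\cK}v+(v-P_{\cK}v)$. By \eqref{eq:PKorth}, the second summand lies in $\cH$. If $k\in\cK\cap\cH$, then $X(k,k)=0$, and \eqref{eq:poincare} forces $k=0$. Thus the sum is direct and $X$-orthogonal. Note that $X$ is only an inner product here, not necessarily complete on $V$, but completeness is not needed: the projection is given explicitly.

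The main technical point is the membership step, specifically justifying that $x\mapsto k_v(x)$ is a legitimate $H^1(D)$ function with zero trace on $\{x=1\}$, given that $g$ may be non-integrable near $x=0$. I would handle this with the weighted estimate, first for smooth $v$, and then pass to general $v\in V$ by density, using that $v\mapsto P_{\cK}v$ is $X$-bounded with norm $\le1$.
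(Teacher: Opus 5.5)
Your proposal is correct and follows the paper's proof essentially step by step. You use the same Cauchy--Schwarz (Jensen) bound $\int_0^1 x\,g^2\le X(v)$, the same Fubini computation for \eqref{eq:PKorth}, and uniqueness from the positive definiteness of $X$; the only difference is that you get $P_{\cK}v\in L^2(D)$ from \eqref{eq:poincare} instead of the paper's $\log(1/x)$-weighted Cauchy--Schwarz bound. That substitution is legitimate. Strictly, \eqref{eq:poincare} is stated only for elements of $V$, but its horizontal-segment proof applies verbatim to $k_v(x)=-\int_x^1 g$, so your closing density argument is unnecessary.
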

	
	\begin{proof}
		Jensen's inequality gives
		\begin{equation}\label{eq:PKenergy}
			\int_0^1x|(P_{\cK}v)'(x)|^2\,\dd x\le X(v).
		\end{equation}
		Moreover, since $(P_{\cK}v)(1)=0$,
		\[
		P_{\cK}v(x)=-\int_x^1(P_{\cK}v)'(s)\,\dd s,
		\]
		and hence
		\[
		|P_{\cK}v(x)|^2 \le \left(\int_x^1s|(P_{\cK}v)'(s)|^2\,\dd s\right) \left(\int_x^1 \frac{1}{s}\, \dd s\right)=\log\frac1x\int_x^1s|(P_{\cK}v)'(s)|^2\,\dd s.
		\]
		Multiplying by $x$ and applying Fubini's theorem shows that
		\[
		\int_D|P_{\cK}v|^2
		=\int_0^1x|P_{\cK}v(x)|^2\,\dd x
		\le C\int_0^1s|(P_{\cK}v)'(s)|^2\,\dd s<\infty.
		\]
		Together with \eqref{eq:PKenergy}, this proves that $P_{\cK}v\in H^1(D)$. Clearly,  $P_{\cK}v\in \cK$ because it is independent of $t$. Finally, for $k=k(x)\in\cK$,
		\[
		X(v-P_{\cK}v,k)
		=\int_0^1k'(x)
		\left[\int_0^xv_x(x,t)\,\dd t-x(P_{\cK}v)'(x)\right]\dd x=0.
		\]
		This proves the splitting statement in the lemma. Also, the uniqueness follows from the positive definiteness of $X$.
	\end{proof}
	
	The space $\cH$ is closed in the $(X+Y)^{1/2}$ norm, hence separable, and $Y$ is positive definite on $\cH$. Applying Lemma \ref{lem:projection} to $w$ gives
	\begin{equation}\label{eq:p-w1}
		p(x):=P_{\cK}w(x)=\frac{1-x^2}{4},
		\qquad
		w_1:=w-p\in\cH.
	\end{equation}
	Since $p$ and $w_1$ are $X$-orthogonal,
	\begin{equation}\label{eq:mass}
		X(w_1)=X(w)-X(p)=\frac1{48}.
	\end{equation}
	Writing $v=k+q$ and $w=p+w_1$ according to \eqref{eq:decomp}, the $\cK$-component of \eqref{eq:mV} is minimized at $k=p$. Therefore
	\begin{equation}\label{eq:mH}
		m(a)=\inf_{q\in\cH}\{aX(q-w_1)+a^{-1}Y(q)\}.
	\end{equation}
	
	The next lemma will be used in the proof of Theorem \ref{thm:concavity}.
	
	\begin{lemma}\label{lem:dual}
		For every $\varphi\in V$,
		\begin{equation}\label{eq:dualidentity}
			X(w_1,\varphi)=-\frac12\int_Dt\varphi_t,
		\end{equation}
		and
		\begin{equation}\label{eq:dualnorm}
			\sup_{Y(\varphi)>0}\frac{|X(w_1,\varphi)|^2}{Y(\varphi)}=\frac1{48}.
		\end{equation}
		The same supremum is obtained if $\varphi$ is restricted to $\cH$.
	\end{lemma}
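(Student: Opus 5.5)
The identity \eqref{eq:dualidentity} should follow from a divergence-theorem computation, and the value $1/48$ from Cauchy--Schwarz plus an approximating sequence.

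\emph{Identity.} Since $w_1=w-p$, \eqref{eq:Riesz} gives, for $\varphi\in V$,
\[
X(w_1,\varphi)=I(\varphi)-X(p,\varphi)=\int_D\varphi-\int_Dp'(x)\varphi_x .
\]
Because $p'(x)=-x/2$, this equals $\int_D\varphi+\frac12\int_Dx\varphi_x$. It then suffices to show
\[
\int_D\bigl(2\varphi+x\varphi_x+t\varphi_t\bigr)=0,
\]
that is, $\int_D\operatorname{div}\bigl(\varphi\,(x,t)\bigr)=0$.

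I would first prove this for $\varphi$ smooth up to $\overline D$ and vanishing near $\{x=1\}$; such functions are dense in $V$. For these, the divergence theorem leaves only boundary terms, and the flux of $\varphi\,(x,t)$ vanishes on each side of $D$:
\begin{itemize}
\item On $\{x=1\}$, $\varphi=0$.
\item On $\{t=0\}$, the outer normal is $(0,-1)$, so $(x,t)\cdot n=-t=0$.
\item On the diagonal $\{t=x\}$, the normal is proportional to $(-1,1)$, so $(x,x)\cdot n=0$.
\end{itemize}
Hence the identity holds for such $\varphi$. Both sides of \eqref{eq:dualidentity} are $H^1$-continuous in $\varphi$, since $x$ and $t$ are bounded on $D$. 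So the identity extends to all of $V$.

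\emph{Upper bound.} By \eqref{eq:dualidentity} and Cauchy--Schwarz,
\[
|X(w_1,\varphi)|^2\le\frac14\int_Dt^2\cdot Y(\varphi),
\qquad
\int_Dt^2=\int_0^1\frac{x^3}{3}\,\dd x=\frac1{12}.
\]
This gives a supremum of at most $1/48$.

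\emph{Sharpness.} Equality in Cauchy--Schwarz would need $\varphi_t\propto t$, for example $\varphi=t^2/2$. That function violates the Dirichlet condition at $x=1$, so I would approximate it. Take $\varphi_n=\chi_n(x)\,t^2/2$, where $\chi_n\in C^\infty$ equals $1$ on $[0,1-1/n]$, vanishes near $1$, and satisfies $0\le\chi_n\le1$. Then $\varphi_n\in V$, and by dominated convergence
\[
Y(\varphi_n)=\int_D\chi_n^2t^2\to\frac1{12},
\qquad
\int_Dt\,\partial_t\varphi_n=\int_D\chi_nt^2\to\frac1{12}.
\]
The ratio therefore tends to $\frac14\cdot\frac{1}{12}=1/48$. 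Only $t$-derivatives enter, so the large $\chi_n'$ causes no trouble. This step, the approximation respecting the trace condition, is the only mildly delicate one.

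\emph{Restriction to $\cH$.} For $\varphi\in V$, write $\varphi=P_{\cK}\varphi+q$ with $q\in\cH$, by Lemma \ref{lem:projection}.
\begin{itemize}
\item Since $w_1\in\cH$, it is $X$-orthogonal to $\cK$, so $X(w_1,\varphi)=X(w_1,q)$.
\item Since $P_{\cK}\varphi$ is independent of $t$, $Y(\varphi)=Y(q)$, and in particular $Y(q)>0$ whenever $Y(\varphi)>0$.
\end{itemize}
Thus each ratio over $V$ equals a ratio over $\cH$, so the two suprema coincide.
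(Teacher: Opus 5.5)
Your proposal is correct and matches the paper's proof: the same Cauchy--Schwarz bound with $\int_D t^2=1/12$, the same cutoff family $\chi(x)t^2$ for sharpness, and the same observation that subtracting $P_{\cK}\varphi$ changes neither $X(w_1,\varphi)$ nor $Y(\varphi)$. The identity step differs only cosmetically. The paper applies the divergence theorem to the divergence-free field $(t-\tfrac x2,\tfrac t2)$, whereas you split it as $(t-x,0)+\tfrac12(x,t)$, handling the first piece by \eqref{eq:Riesz} and the second by the dilation field, and you add the density justification the paper leaves implicit.
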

	
	\begin{proof}
		Since $(w_1)_x=t-x/2$, the vector field
		\[
		\bm F(x,t)=\left(t-\frac x2,\frac t2\right)
		\]
		is divergence free and has zero normal component on the sides $t=0$ and $t=x$. Because $\varphi=0$ on $x=1$, the divergence theorem gives \eqref{eq:dualidentity}. Hence
		\[
		|X(w_1,\varphi)|^2
		\le\left(\int_D\frac{t^2}{4}\right)Y(\varphi)
		=\frac1{48}Y(\varphi).
		\]
		The constant is sharp. Indeed, choose $\chi_\varepsilon\in W^{1,\infty}(0,1)$ such that $0\le\chi_\varepsilon\le1$, $\chi_\varepsilon=1$ on $[0,1-\varepsilon]$, and $\chi_\varepsilon(1)=0$, and put
		\[
		\varphi_\varepsilon(x,t)=-\frac14\chi_\varepsilon(x)t^2.
		\]
		Then, by dominated convergence, $X(w_1,\varphi_\varepsilon)\to1/48$ and $Y(\varphi_\varepsilon)\to1/48$. 
		
		Finally, subtracting $P_{\cK}\varphi$ does not change either $X(w_1,\varphi)$ or $Y(\varphi)$, so the supremum may be taken over $\cH$.
	\end{proof}
	
	\subsection*{Step 3: Galerkin representation}
	
	Choose nested finite-dimensional spaces $H_n\subset\cH$ such that $w_1\in H_n$ and $\bigcup_nH_n$ is dense in $\cH$ for the $(X+Y)^{1/2}$ norm. Set
	\begin{equation}\label{eq:mn}
		m_n(a)=\min_{q\in H_n}\{aX(q-w_1)+a^{-1}Y(q)\}.
	\end{equation}
	Since the spaces are nested, $m_n(a)$ is decreasing. The quadratic functional in \eqref{eq:mn} is continuous in the $(X+Y)^{1/2}$ norm, and $\bigcup_n H_n$ is dense in $\cH$; consequently,
	\begin{equation}\label{eq:mnlimit}
		m_n(a)\downarrow m(a)
		\qquad(a>0).
	\end{equation}
	
	On $H_n$, $X$ is an inner product and $Y$ is a positive-definite symmetric form. Hence one can choose an $X$-orthonormal basis $e_{n,1},\dots,e_{n,d_n}$ and positive numbers $\lambda_{n,1},\dots,\lambda_{n,d_n}$ such that
	\begin{equation}\label{eq:diagonal}
		Y(e_{n,j},e_{n,k})=\lambda_{n,j}\delta_{jk}.
	\end{equation}
	Write $w_1=\sum_jc_{n,j}e_{n,j}$ and introduce the finite positive measure
	\begin{equation}\label{eq:mun}
		\mu_n=\sum_{j=1}^{d_n}c_{n,j}^2\delta_{\lambda_{n,j}}.
	\end{equation}
	Scalar minimization over $q=\sum_j s_{n,j} e_{n,j} \in H_n$ gives
	\begin{equation}\label{eq:mnmeasure}
		m_n(a)=\int_{(0,\infty)}\frac{a\lambda}{a^2+\lambda}\,\dd\mu_n(\lambda).
	\end{equation}
	Moreover, \eqref{eq:mass} and Lemma \ref{lem:dual} give the two moment bounds
	\begin{equation}\label{eq:moments-n}
		\int\dd\mu_n=\frac1{48},
		\qquad
		\int\lambda^{-1}\,\dd\mu_n(\lambda)
		=\sup_{0\ne q\in H_n}\frac{|X(w_1,q)|^2}{Y(q)}
		\le\frac1{48}.
	\end{equation}
	The identity in the second formula is the equality case of the weighted Cauchy--Schwarz inequality in the diagonal basis. More precisely, for any  $q=\sum_j s_{n,j} e_{n,j} \in H_n$, Cauchy--Schwarz yields
	\begin{align*}
		\frac{|X(w_1,q)|^2}{Y(q)}=\frac{(\sum_j c_{n,j}s_{n,j})^2}{\sum_j \lambda_{n,j}s_{n,j}^2}\le \sum_j \frac{c_{n,j}^2}{\lambda_{n,j}}=\int\lambda^{-1}\,\dd\mu_n(\lambda),
	\end{align*}
	and equality is attained by choosing $s_{n,j}=\tfrac{c_{n,j}}{\lambda_{n,j}}$.
	
	\subsection*{Step 4: The kernel inequality and proof of Theorem \ref{thm:concavity}}
	
	Now we are ready to prove Theorem \ref{thm:concavity}.

	\begin{proof}[Proof of Theorem \ref{thm:concavity}]
		For each $n$, consider the approximating function
		\[
		\alpha\longmapsto -\frac1{24}\tan\alpha-m_n(\tan\alpha).
		\]
		By \eqref{eq:h-m} and \eqref{eq:mnlimit}, these functions increase pointwise to
		\[
		\alpha\longmapsto h(\tan\alpha)-\frac18\tan\alpha.
		\]
		
		Let $a=\tan\alpha$. A direct differentiation gives
		\begin{equation}\label{eq:second-scalar}
			\frac{\dd^2}{\dd\alpha^2}
			\left(\frac{a\lambda}{a^2+\lambda}\right)
			=-a(1+a^2)K_{a^2}(\lambda).
		\end{equation}
		Hence, by \eqref{eq:mnmeasure} and the second derivative of $\tan\alpha$, the second derivative of the above approximant is
		\begin{equation}\label{eq:approximant-second}
			a(1+a^2)
			\left[\int K_{a^2}(\lambda)\,\dd\mu_n(\lambda)-\frac1{12}\right],
		\end{equation}
		where
		\begin{equation}\label{eq:kernel}
			K_x(\lambda):=\frac{2\lambda(3x\lambda-x-\lambda^2+3\lambda)}{(x+\lambda)^3},
		\end{equation}
		defined for $x,\lambda>0$.
		
		We shall use the following kernel inequality:
		\begin{equation}\label{eq:kernel-basic}
			K_x(\lambda)<4+\frac6{1+x}\left(\frac1\lambda-1\right).
		\end{equation}
		Indeed, the difference between the right- and left-hand sides is
		\begin{equation}\label{eq:remainder}
			R(x,\lambda)=\frac{2xP_x(\lambda)}{\lambda(x+\lambda)^3(1+x)},
		\end{equation}
		where
		\begin{align}\label{eq:poly}
			P_x(\lambda)={}&2\lambda x^3+(6\lambda^2-\lambda+3)x^2\notag\\
			&+(3\lambda^3-2\lambda^2+9\lambda)x
			+\lambda^2(3\lambda^2-9\lambda+10).
		\end{align}
		Each coefficient displayed in \eqref{eq:poly} is positive for $\lambda>0$: indeed,
		\[
		6\lambda^2-\lambda+3>0,\qquad
		3\lambda^2-2\lambda+9>0,\qquad
		3\lambda^2-9\lambda+10>0.
		\]
		Thus $P_x(\lambda)>0$ and hence $R(x,\lambda)>0$.
		
		More is true on compact $x$-intervals. If $0<x_0\le x\le x_1<\infty$, then
		\begin{equation}\label{eq:uniform-gap}
			\inf_{x\in[x_0,x_1],\,\lambda>0}R(x,\lambda)>0.
		\end{equation}
		Indeed,
		\[
		\lambda R(x,\lambda)\longrightarrow\frac6{1+x}
		\quad(\lambda\downarrow0),
		\qquad
		R(x,\lambda)\longrightarrow\frac{6x}{1+x}
		\quad(\lambda\to\infty),
		\]
		uniformly for $x\in[x_0,x_1]$; on the remaining compact rectangle the positive continuous function $R$ has a positive minimum.
		
		Fix $J=[\alpha_0,\alpha_1]\Subset(0,\pi/2)$ and let $\delta_J>0$ be the uniform lower bound in \eqref{eq:uniform-gap} for $x\in[\tan^2\alpha_0,\tan^2\alpha_1]$. From \eqref{eq:kernel-basic}--\eqref{eq:moments-n},
		\begin{align*}
			\int K_{a^2}(\lambda)\,\dd\mu_n(\lambda)
			&\le4\int\dd\mu_n
			+\frac6{1+a^2}
			\left(\int\lambda^{-1}\,\dd\mu_n-\int\dd\mu_n\right)
			-\delta_J\int\dd\mu_n\\
			&\le\frac1{12}-\frac{\delta_J}{48}.
		\end{align*}
		Consequently, the expression in \eqref{eq:approximant-second} is at most $-c_J$ on $J$, where
		\[
		c_J=\frac{\delta_J}{48}
		\min_{\alpha\in J}\tan\alpha\bigl(1+\tan^2\alpha\bigr)>0.
		\]
		Thus
		\[
		\alpha\longmapsto -\frac1{24}\tan\alpha-m_n(\tan\alpha)+\frac{c_J}{2}\alpha^2
		\]
		is concave on $J$. Pointwise limits preserve concavity, so passing to the limit gives \eqref{eq:strong-concavity}, and in particular strict concavity of $\alpha\mapsto h(\tan\alpha)-\frac18\tan\alpha$.
		
		Finally, the trial function $p(x)=(1-x^2)/4$ satisfies $Y(p)=0$ and $I(p)=X(p)=1/16$. Substitution into \eqref{eq:hfixed} gives \eqref{eq:h-lower}.
	\end{proof}

	\begin{remark}[Sharpness of \eqref{eq:h-lower}]\label{rem:h-lower-sharp}
		The constant $1/16$ in \eqref{eq:h-lower} is optimal. More precisely,
		\begin{equation}\label{eq:h-lower-sharp}
			\inf_{a>0}\frac{h(a)}{a}=\frac1{16}.
		\end{equation}
		To see this, let $R_N$ be the regular $N$-gon of area $\pi$ and let $r_N$ be its inradius. By the exactness statement in Proposition~\ref{prop:relax},
		\[
		T(R_N)=2Nr_N^4h\left(\tan\frac{\pi}{N}\right),
		\]
		while \eqref{eq:area-perimeter} gives
		\[
		\pi=Nr_N^2\tan\frac{\pi}{N}.
		\]
		Consequently,
		\begin{equation}\label{eq:h-regular-cell}
			\frac{h(\tan(\pi/N))}{\tan(\pi/N)}
			=\frac{N\tan(\pi/N)}{2\pi^2}T(R_N).
		\end{equation}
		The Saint--Venant inequality yields $T(R_N)\le \pi/8$, since the disk of area $\pi$ has torsional rigidity $\pi/8$. Combining this with \eqref{eq:h-lower},
		\[
		\frac1{16}
		\le \frac{h(\tan(\pi/N))}{\tan(\pi/N)}
		\le \frac{N\tan(\pi/N)}{16\pi}.
		\]
		Since $N\tan(\pi/N)\to\pi$, the squeeze theorem gives
		\[
		\frac{h(\tan(\pi/N))}{\tan(\pi/N)}\longrightarrow\frac1{16},
		\]
		which proves \eqref{eq:h-lower-sharp}. Notice that this argument does not require the monotonicity of $T(R_N)$.
		
		A second proof connects the sharp constant directly with the mixed-cell asymptotics in \cite[Theorem~1.3]{GuiHuLiZhang2026}. Let
		\[
		\Omega_q=\{(x,y):0<x<q,\ 0<y<1-x/q\},
		\]
		with a Dirichlet condition on $\{x=0\}$ and homogeneous Neumann conditions on the other two sides. After dilation by $a^{-1}$ and a rigid reflection, $D_a$ is mapped onto $\Omega_{1/a}$, with the boundary conditions preserved. Hence the fourth-order scaling of torsional rigidity gives
		\begin{equation}\label{eq:h-omega-scaling}
			h(a)=a^4T(\Omega_{1/a}).
		\end{equation}
		The asymptotic expansion from \cite[Theorem~1.3]{GuiHuLiZhang2026} is
		\[
		T(\Omega_q)=\frac{q^3}{16}+\frac{q}{48}-\frac{\zeta(3)}{2\pi^3}+O(q^{-1})
		\qquad(q\to\infty).
		\]
		Substituting $q=1/a$ into \eqref{eq:h-omega-scaling} yields
		\begin{equation}\label{eq:h-small-a}
			h(a)=\frac a{16}+\frac{a^3}{48}-\frac{\zeta(3)}{2\pi^3}a^4+O(a^5)
			\qquad(a\downarrow0).
		\end{equation}
		Thus $h(a)/a\to1/16$. This gives the sharpness from a different viewpoint and, in addition, the rate at which the lower bound is approached.
	\end{remark}
	
	\subsection*{An equivalent spectral-measure formulation}\label{subsec:spectral}
	
	The Galerkin measures $\mu_n$ above are finite-dimensional substitutes for a single spectral measure. We record the equivalent infinite-dimensional formulation because it isolates the operator-theoretic structure behind the concavity argument.
	
	On the $X$-completion of $\cH$, with form domain $\cH$, the form $Y$ is densely defined, symmetric, nonnegative, and closed, because its form norm is $(X+Y)^{1/2}$ and $\cH$ is closed in that norm. Kato's second representation theorem \cite[Chapter~VI, Theorem~2.23]{Kato1976} therefore associates with $Y$ a nonnegative self-adjoint operator $A$ satisfying
	\begin{equation}\label{eq:kato}
		D(A^{1/2})=\cH,
		\qquad
		Y(q,\varphi)=X(A^{1/2}q,A^{1/2}\varphi).
	\end{equation}
	No coercivity assumption is needed. Since $Y$ is positive definite on $\cH$ after the zero mode has been removed, $\ker A=\{0\}$, although $0$ may still belong to the spectrum.
	
	Let $E$ be the spectral resolution of $A$ and define
	\begin{equation}\label{eq:mu}
		\mu(B)=X(E(B)w_1,w_1),
		\qquad B\subset(0,\infty)\ \text{Borel}.
	\end{equation}
	The spectral theorem and the same scalar minimization as in \eqref{eq:mnmeasure} give
	\begin{equation}\label{eq:spectral-m}
		m(a)=\int_{(0,\infty)}\frac{a\lambda}{a^2+\lambda}\,\dd\mu(\lambda).
	\end{equation}
	Moreover,
	\begin{equation}\label{eq:spectral-moments}
		\int\dd\mu=X(w_1)=\frac1{48},
		\qquad
		\int\lambda^{-1}\,\dd\mu(\lambda)
		=\sup_{Y(q)>0}\frac{|X(w_1,q)|^2}{Y(q)}
		=\frac1{48}.
	\end{equation}
	The second identity is the spectral characterization of the dual form norm: after truncation to $[1/n,n]$, Cauchy--Schwarz gives one inequality, while the test function $A^{-1}E([1/n,n])w_1$ gives the reverse one; then $n\to\infty$. Integrating the kernel inequality \eqref{eq:kernel-basic} against $\mu$ therefore proves the strict concavity directly. This is the infinite-dimensional counterpart of the Galerkin proof; see \cite{Kato1976,ReedSimon1980} for the underlying form and spectral theory.
	
	\section{Proof of the quantitative inequality}\label{sec:mainproof}
	
	\begin{proof}[Proof of Theorem \ref{thm:main}]
		By the strict convexity of $\tan$ and \eqref{eq:alphasum},
		\begin{equation}\label{eq:S-lower}
			S=\sum_{i=1}^N\tan\alpha_i\ge Na_*.
		\end{equation}
		Set
		\begin{equation}\label{eq:zdef}
			z:=\frac{S}{Na_*}\ge1.
		\end{equation}
		Let $r_N$ be the inradius of the equal-area regular $N$-gon $R_N$. Since
		\[
		|P|=r^2Na_*z,
		\qquad
		|R_N|=r_N^2Na_*,
		\]
		we have
		\begin{equation}\label{eq:rN}
			r_N^2=r^2z,
			\qquad
			T(R_N)=2Nr^4z^2h(a_*).
		\end{equation}
		Moreover, by \eqref{eq:area-perimeter},
		\[
		L(P)=2rNa_*z,
		\qquad
		L(R_N)=2r_NNa_*=2r\sqrt z\,Na_*,
		\]
		so
		\begin{equation}\label{eq:z-perimeter}
			z=\left(\frac{L(P)}{L(R_N)}\right)^2.
		\end{equation}
		
		Theorem \ref{thm:concavity} and $\sum_i\alpha_i=\pi$ give
		\begin{equation}\label{eq:jensen-direct}
			\sum_{i=1}^N\left(h(a_i)-\frac{a_i}{8}\right)
			\le
			N\left(h(a_*)-\frac{a_*}{8}\right).
		\end{equation}
		By strict concavity, equality in \eqref{eq:jensen-direct} holds if and only if $\alpha_i=\pi/N$ for every $i$. Likewise, strict convexity of $\tan$ shows that equality in \eqref{eq:S-lower}, equivalently $z=1$, holds if and only if $\alpha_i=\pi/N$ for every $i$. In that case every side has length $r(\tan\alpha_i+\tan\alpha_{i+1})=2r\tan(\pi/N)$, so $P$ is regular. Hence each of the two nonnegative terms on the right-hand side of \eqref{eq:main-deficit} vanishes if and only if $P$ is regular.
		Combining Proposition \ref{prop:relax} with $S=Na_*z$ yields
		\begin{align}
			T(R_N)-T(P)
			&\ge 2r^4\left[Nz^2h(a_*)-\sum_{i=1}^Nh(a_i)\right]\notag\\
			&=2r^4\Bigg[
			N\left(h(a_*)-\frac{a_*}{8}\right)
			-\sum_{i=1}^N\left(h(a_i)-\frac{a_i}{8}\right)\notag\\
			&\hspace{25mm}
			+N(z-1)\left(h(a_*)(z+1)-\frac{a_*}{8}\right)
			\Bigg]\notag\\
			&\ge 2r^4\left[
			N\left(h(a_*)-\frac{a_*}{8}\right)
			-\sum_{i=1}^N\left(h(a_i)-\frac{a_i}{8}\right)
			+\frac{Na_*}{16}(z-1)^2
			\right].\label{eq:master-deficit}
		\end{align}
		The last inequality uses $h(a_*)\ge a_*/16$. Substituting \eqref{eq:z-perimeter} gives \eqref{eq:main-deficit}; its right-hand side is nonnegative by \eqref{eq:jensen-direct}.
		
		If equality holds in $T(P)\le T(R_N)$, then \eqref{eq:master-deficit} and the preceding equality characterization force $P$ to be regular. Conversely, equality holds for the regular polygon by Proposition \ref{prop:relax}.
		
		Finally, suppose that $\alpha_i\in J$ for every $i$. The strong concavity estimate \eqref{eq:strong-concavity} is equivalent to concavity on $J$ of
		\[
		\alpha\longmapsto h(\tan\alpha)-\frac18\tan\alpha+\frac{c_J}{2}\alpha^2.
		\]
		Applying Jensen's inequality to this map and using $\sum_i\alpha_i=\pi$ gives
		\begin{align*}
			&N\left(h(a_*)-\frac{a_*}{8}\right)
			-\sum_{i=1}^N\left(h(a_i)-\frac{a_i}{8}\right)\\
			&\qquad\ge \frac{c_J}{2}
			\left(\sum_{i=1}^N\alpha_i^2-N\left(\frac\pi N\right)^2\right)
			=\frac{c_J}{2}\sum_{i=1}^N\left(\alpha_i-\frac\pi N\right)^2.
		\end{align*}
		Together with \eqref{eq:master-deficit} and \eqref{eq:z-perimeter}, this proves \eqref{eq:main-stability}.
	\end{proof}
	
	\begin{proof}[Proof of Corollary \ref{cor:perimeter-deficit}]
		Dropping the nonnegative angular term from \eqref{eq:main-deficit} gives \eqref{eq:perimeter-deficit-r}. With $z$ as in \eqref{eq:zdef}, we have $A=r^2Na_*z$ and, by \eqref{eq:z-perimeter}, $z=L(P)^2/L(R_N)^2$. Hence
		\[
		\frac{Na_*r^4}{8}(z-1)^2
		=\frac{A^2}{8Na_*}\left(1-\frac1z\right)^2
		=\frac{A^2}{8N\tan(\pi/N)}
		\left(1-\frac{L(R_N)^2}{L(P)^2}\right)^2,
		\]
		which proves \eqref{eq:perimeter-deficit-area}.
	\end{proof}
	
	The scale-invariant consequence is
	\begin{equation}\label{eq:normalized}
		\frac{T(P)}{|P|^2}
		\le
		\frac{2h(\tan(\pi/N))}{N\tan^2(\pi/N)},
	\end{equation}
	with equality only for the regular polygon.
	
	\section{Monotonicity along regular polygons}\label{sec:monotonicity}
	
	We use the perimeter deficit in Corollary \ref{cor:perimeter-deficit} to prove Corollary \ref{cor:regular-monotonicity}.
	
	\begin{proof}[Proof of Corollary \ref{cor:regular-monotonicity}]
		Fix an area $A>0$ and an integer $N\ge3$. For $\varepsilon>0$ sufficiently small, set
		\[
		\alpha_1^\varepsilon=\varepsilon,
		\qquad
		\alpha_2^\varepsilon=\cdots=\alpha_{N+1}^\varepsilon
		=\frac{\pi-\varepsilon}{N}.
		\]
		These parameters are positive and satisfy $\sum_{i=1}^{N+1}\alpha_i^\varepsilon=\pi$. Choose successive tangent lines to a circle so that their turning angles are $2\alpha_i^\varepsilon$. Since the total turning angle is $2\pi$, these lines determine a tangential $(N+1)$-gon $P_\varepsilon$, which we scale so that $|P_\varepsilon|=A$.
		
		As $\varepsilon\downarrow0$, the interior angle corresponding to $\alpha_1^\varepsilon$ tends to $\pi$, the two adjacent sides merge, and all the remaining angle parameters tend to $\pi/N$. Hence, after a rigid motion, $P_\varepsilon\to R_N$ in the Hausdorff topology. Place $R_N$ with its incenter at the origin. For convex bodies, Hausdorff convergence is equivalent to uniform convergence of support functions; since the support function of $R_N$ is strictly positive, for every $\delta\in(0,1)$ and all sufficiently small $\varepsilon$,
		\[
		(1-\delta)R_N\subset P_\varepsilon\subset(1+\delta)R_N.
		\]
		By monotonicity and fourth-order scaling of torsional rigidity,
		\[
		(1-\delta)^4T(R_N)\le T(P_\varepsilon)\le(1+\delta)^4T(R_N).
		\]
		Letting first $\varepsilon\downarrow0$ and then $\delta\downarrow0$ gives
		\begin{equation}\label{eq:torsion-limit}
			T(P_\varepsilon)\longrightarrow T(R_N).
		\end{equation}
		
		Let $r_\varepsilon$ be the inradius of $P_\varepsilon$. 
		By \eqref{eq:z-perimeter}, applied with $N$ replaced by $N+1$,
		\[
		z_\varepsilon
		:=
		\left(\frac{L(P_\varepsilon)}{L(R_{N+1})}\right)^2
		=
		\frac{\tan\varepsilon
			+N\tan((\pi-\varepsilon)/N)}
		{(N+1)\tan(\pi/(N+1))}.
		\]
		Since
		\[
		A=r_\varepsilon^2\left[\tan\varepsilon+N\tan\frac{\pi-\varepsilon}{N}\right],
		\]
		we have
		\begin{equation}\label{eq:repsilon-limit}
			r_\varepsilon^2\longrightarrow
			\frac{A}{N\tan(\pi/N)}=:r_N^2,
		\end{equation}
		where $r_N$ is the inradius of the area-$A$ regular $N$-gon.
		
		Apply Corollary \ref{cor:perimeter-deficit} in the form \eqref{eq:perimeter-deficit-r} to $P_\varepsilon$ and the equal-area regular $(N+1)$-gon $R_{N+1}$. This gives
		\begin{equation}\label{eq:monotonicity-remainder}
			T(R_{N+1})-T(P_\varepsilon)
			\ge
			\frac{r_\varepsilon^4(N+1)\tan(\pi/(N+1))}{8}
			(z_\varepsilon-1)^2.
		\end{equation}
		Moreover,
		\[
		z_\varepsilon\longrightarrow
		\frac{N\tan(\pi/N)}{(N+1)\tan(\pi/(N+1))}>1,
		\]
		because $x\mapsto\tan x/x$ is strictly increasing on $(0,\pi/2)$ and $\pi/N>\pi/(N+1)$. Passing to the limit in \eqref{eq:monotonicity-remainder}, and using \eqref{eq:torsion-limit} and \eqref{eq:repsilon-limit}, we obtain
		\begin{align}\label{eq:explicit-monotonicity}
			T(R_{N+1})-T(R_N)
			&\ge
			\frac{r_N^4(N+1)\tan(\pi/(N+1))}{8}\notag\\
			&\quad\times
			\left(
			\frac{N\tan(\pi/N)}{(N+1)\tan(\pi/(N+1))}-1
			\right)^2>0.
		\end{align}
		This proves the strict monotonicity.
	\end{proof}

	\section{Spectral consequence}\label{sec:eigenvalue}
	
	Let $\lambda_1(\Omega)$ denote the first Dirichlet eigenvalue of $-\Delta$ on $\Omega$, and let $j_{0,1}$ be the first positive zero of the Bessel function $J_0$. The planar Kohler--Jobin inequality states that
	\begin{equation}\label{eq:KJ}
		\lambda_1(\Omega)^2T(\Omega)\ge \frac{\pi j_{0,1}^4}{8},
	\end{equation}
	with equality if and only if $\Omega$ is a disk; see, for instance, \cite{Brasco2014}.
	
	Let $A>0$, and for each $N\ge3$ let $R_N$ be the regular $N$-gon of area $A$. Define the Kohler--Jobin slack of $R_N$ by
	\begin{equation}\label{eq:KJ-gap}
		\eta_N(A):=T(R_N)-\frac{\pi j_{0,1}^4}{8\lambda_1(R_N)^2}.
	\end{equation}
	Since $R_N$ is not a disk, the strict case of \eqref{eq:KJ} gives $\eta_N(A)>0$.
	
	Theorem \ref{thm:main} gives a direct spectral criterion. Recall that $a_*=\tan(\pi/N)$.
	
	\begin{proposition}\label{prop:eigenvalue-consequence}
		Let $P$ be a tangential $N$-gon satisfying $|P|=|R_N|=A$. If
		\begin{equation}\label{eq:eigenvalue-criterion}
			2r^4\left[
			N\left(h(a_*)-\frac{a_*}{8}\right)
			-\sum_{i=1}^N\left(h(a_i)-\frac{a_i}{8}\right)
			+\frac{Na_*}{16}\left(\frac{L(P)^2}{L(R_N)^2}-1\right)^2
			\right]
			\ge\eta_N(A),
		\end{equation}
		then
		\[
		\lambda_1(P)>\lambda_1(R_N).
		\]
	\end{proposition}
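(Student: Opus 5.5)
Combine Theorem \ref{thm:main} with the Kohler--Jobin inequality \eqref{eq:KJ} applied to $P$. Denote by $\Delta(P)$ the left-hand side of \eqref{eq:eigenvalue-criterion}. By \eqref{eq:main-deficit}, $T(R_N)-T(P)\ge\Delta(P)$. The hypothesis $\Delta(P)\ge\eta_N(A)$ together with the definition \eqref{eq:KJ-gap} then gives
\[
T(P)\le T(R_N)-\eta_N(A)=\frac{\pi j_{0,1}^4}{8\lambda_1(R_N)^2}.
\]

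Next we invoke \eqref{eq:KJ} for $P$. A polygon is not a disk, so the inequality is strict:
\[
\lambda_1(P)^2T(P)>\frac{\pi j_{0,1}^4}{8}.
\]
Since $T(P)>0$, this is $\lambda_1(P)^2>\pi j_{0,1}^4/(8T(P))$. Inserting the upper bound for $T(P)$ from the previous step gives
\[
\lambda_1(P)^2>\frac{\pi j_{0,1}^4}{8}\cdot\frac{8\lambda_1(R_N)^2}{\pi j_{0,1}^4}=\lambda_1(R_N)^2,
\]
and positivity of the eigenvalues yields $\lambda_1(P)>\lambda_1(R_N)$.

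There is no real obstacle; the only care needed is where the strictness comes from. The hypothesis \eqref{eq:eigenvalue-criterion} is non-strict, and the deficit bound \eqref{eq:main-deficit} may be an equality, so strictness must come from the equality case of Kohler--Jobin. Because $P$ is a polygon and hence never a disk, \eqref{eq:KJ} is strict for $P$, and this alone suffices. The strictness $\eta_N(A)>0$ for $R_N$ is not needed in this argument; it only ensures that the criterion is nontrivial.
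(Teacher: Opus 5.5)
Your proposal is correct and matches the paper's proof essentially step for step. Theorem~\ref{thm:main} together with the hypothesis gives $T(P)\le \pi j_{0,1}^4/(8\lambda_1(R_N)^2)$, and the strict Kohler--Jobin inequality for the non-disk $P$ then yields $\lambda_1(P)>\lambda_1(R_N)$, with strictness coming from Kohler--Jobin exactly as you observe.
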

	
	\begin{proof}
		By Theorem \ref{thm:main} and \eqref{eq:eigenvalue-criterion},
		\[
		T(P)\le T(R_N)-\eta_N(A)
		=\frac{\pi j_{0,1}^4}{8\lambda_1(R_N)^2}.
		\]
		Since a polygon cannot be a disk, the Kohler--Jobin inequality is strict for $P$. Consequently,
		\[
		\lambda_1(P)^2
		>\frac{\pi j_{0,1}^4}{8T(P)}
		\ge\lambda_1(R_N)^2,
		\]
		which proves the claim.
	\end{proof}
	
	The perimeter-only estimate \eqref{eq:perimeter-deficit-area} immediately gives the simpler sufficient condition
	\begin{equation}\label{eq:perimeter-eigenvalue-criterion}
		\frac{A^2}{8N\tan(\pi/N)}
		\left(1-\frac{L(R_N)^2}{L(P)^2}\right)^2
		\ge \eta_N(A)
		\quad\Longrightarrow\quad
		\lambda_1(P)>\lambda_1(R_N).
	\end{equation}
	Hence every possible counterexample within the tangential class must satisfy the strict reverse of \eqref{eq:perimeter-eigenvalue-criterion}.
	
	We next determine the large-$N$ behavior of the threshold $\eta_N(A)$.
	
	\begin{lemma}[Asymptotics of the Kohler--Jobin slack]\label{lem:KJ-gap-asymptotic}
		As $N\to\infty$,
		\begin{equation}\label{eq:eta-asymptotic}
			\eta_N(A)=\frac{\pi^3A^2}{45N^4}
			+O\left(\frac{A^2}{N^5}\right).
		\end{equation}
	\end{lemma}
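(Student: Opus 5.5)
The plan is to take the exact asymptotic expansions of $T(R_N)$ and $\lambda_1(R_N)$ at fixed area $A$ to order $N^{-4}$, with $O(N^{-5})$ remainders, and substitute them into \eqref{eq:KJ-gap}. By scaling, $T(R_N)=A^2\,\tau_N$ and $\lambda_1(R_N)=\lambda_N/A$, where $\tau_N,\lambda_N$ are the values for area one. Hence
\[
\eta_N(A)=A^2\Bigl(\tau_N-\frac{\pi j_{0,1}^4}{8\lambda_N^2}\Bigr),
\]
and it suffices to treat $A=1$. In the expansion of $\eta_N$ the leading terms $1/(8\pi)$ and the $\zeta(3)N^{-3}$ terms must cancel, and the $N^{-4}$ coefficient should come out as $\pi^3/45$.

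\emph{Eigenvalue side.} I would use the known expansion of $\lambda_1(R_N)$ for regular polygons of area one (see \cite{BerghausGeorgievMonienRadchenko2024} and the references there):
\[
\lambda_N=\pi j_{0,1}^2\Bigl(1+\frac{4\zeta(3)}{N^3}+O(N^{-5})\Bigr).
\]
Note that there is no $N^{-4}$ term. This gives
\[
\frac{\pi j_{0,1}^4}{8\lambda_N^2}=\frac1{8\pi}\Bigl(1-\frac{8\zeta(3)}{N^3}+O(N^{-5})\Bigr).
\]
Consequently the whole $N^{-4}$ contribution to $\eta_N$ comes from the torsion.

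\emph{Torsion side.} Here I use the exact cell formula from Proposition~\ref{prop:relax} and Remark~\ref{rem:h-lower-sharp}. With $a=\tan(\pi/N)$ and $r_N^2=1/(Na)$,
\[
\tau_N=2Nr_N^4h(a)=\frac{2}{N}\,\frac{h(a)}{a^2}.
\]
I then insert the small-$a$ expansion \eqref{eq:h-small-a} of $h$, which comes from \eqref{eq:h-omega-scaling} and \cite[Theorem~1.3]{GuiHuLiZhang2026}. Expanding $1/a=N/\pi-\pi/(3N)-\pi^3/(45N^3)+\dots$ and $a=\pi/N+\pi^3/(3N^3)+\dots$ gives the following. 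The $a/16$ term yields $1/(8\pi)$. The $N^{-2}$ terms from $a/16$ and $a^3/48$ cancel. The $\zeta(3)a^4$ term yields exactly $-\zeta(3)/(\pi N^3)=-\frac{1}{8\pi}\cdot 8\zeta(3)N^{-3}$, which matches the eigenvalue side and cancels in $\eta_N$. The remaining $N^{-4}$ contributions then have to be collected.

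\emph{Main obstacle.} The expansion \eqref{eq:h-small-a} stops at $O(a^5)$. Since $h(a)/a^2$ is multiplied by $2/N$, an $a^5$ term in $h$ produces an $N^{-4}$ term in $\tau_N$, so it is exactly at the critical order. I would therefore need the next term of the mixed-cell asymptotics of \cite{GuiHuLiZhang2026}, namely the explicit coefficient of $q^{-1}$ in $T(\Omega_q)$ together with an $O(q^{-2})$ remainder. This gives $h(a)=\frac a{16}+\frac{a^3}{48}-\frac{\zeta(3)}{2\pi^3}a^4+c_5a^5+O(a^6)$ with an explicit $c_5$. I would first look for this refinement in the series representation of \cite{GuiHuLiZhang2026}; the $q^{-1}$ coefficient should be a combination of $\pi$-powers, presumably involving $\zeta(4)=\pi^4/90$. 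Failing that, I would use the direct regular-polygon torsion expansion from the Schwarz--Christoffel analysis there.

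With $c_5$ in hand, the $N^{-4}$ coefficient of $\tau_N$ is
\[
2\pi^3\Bigl(\tfrac1{144}-\tfrac1{720}\Bigr)+2\pi^3c_5,
\]
coming from the $\tan$-expansion corrections and the $a^5$ term. Checking that this equals $\pi^3/45$ is a routine but delicate bookkeeping step. Finally, all remainders are $O(N^{-5})$ uniformly, and multiplying by $A^2$ gives \eqref{eq:eta-asymptotic}.
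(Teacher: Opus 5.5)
\textbf{Gap in the torsion half.} Your eigenvalue half and the reduction to $A=1$ by scaling match the paper's proof exactly. The problem is the torsion half. Your bookkeeping is correct: with $a=\tan(\pi/N)$, the $N^{-4}$ coefficient of $\tau_N=\frac{2}{N}h(a)/a^2$ is
\[
2\pi^3\Bigl(\tfrac1{144}-\tfrac1{720}\Bigr)+2\pi^3c_5=\frac{\pi^3}{90}+2\pi^3c_5 .
\]
So the lemma holds \emph{if and only if} $c_5=\frac1{180}$, equivalently if the $q^{-1}$ coefficient of $T(\Omega_q)$ equals $\frac1{180}$ (which is $\zeta(4)/(2\pi^4)$, in line with your guess).

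This is not a ``routine but delicate bookkeeping step''. It is the entire content of the $N^{-4}$ term. Nothing in the paper supplies it, since \eqref{eq:h-small-a} carries only an $O(a^5)$ remainder, and the cited mixed-cell expansion has only an $O(q^{-1})$ remainder. From the available data, your route yields only $\eta_N(A)=O(A^2N^{-4})$, with the constant and $\zeta(3)N^{-3}$ terms cancelling as you checked. It does not give the coefficient $\pi^3/45$.

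\textbf{How the paper closes it.} The paper never goes through $h$ for this lemma. It quotes the regular-polygon torsion expansion, including its $N^{-4}$ term, directly from \cite[Theorem~1.4]{GuiHuLiZhang2026}. At area $\pi$ this reads $T(\widehat R_N)=\frac{\pi}{8}-\frac{\pi\zeta(3)}{N^3}+\frac{\pi^5}{45N^4}+O(N^{-5})$. At area one it becomes $\tau_N=\frac1{8\pi}-\frac{\zeta(3)}{\pi N^3}+\frac{\pi^3}{45N^4}+O(N^{-5})$.

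That is exactly your stated fallback. If you make it the main argument, the proof is complete and coincides with the paper's. Subtracting your expansion $\frac{\pi j_{0,1}^4}{8\lambda_N^2}=\frac1{8\pi}-\frac{\zeta(3)}{\pi N^3}+O(N^{-5})$ leaves $\frac{\pi^3}{45N^4}+O(N^{-5})$, and multiplying by $A^2$ gives the lemma.

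You can then keep the cell-based computation only as a consistency check, since it reproduces the torsion expansion through order $N^{-3}$. It becomes an independent proof only if you can cite $c_5=\frac1{180}$ together with an $O(a^6)$ remainder.
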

	
	\begin{proof}
		Let $\widehat R_N$ denote the regular $N$-gon of area $\pi$. The regular-polygon torsion expansion obtained in \cite[Theorem~1.4]{GuiHuLiZhang2026} is
		\begin{equation}\label{eq:torsion-regular-expansion-pi}
			T(\widehat R_N)
			=\frac{\pi}{8}-\frac{\pi\zeta(3)}{N^3}
			+\frac{\pi^5}{45N^4}+O(N^{-5}).
		\end{equation}
		On the other hand, the first Dirichlet eigenvalue expansion of Berghaus, Georgiev, Monien, and Radchenko \cite[Equation~(2) and Table~1]{BerghausGeorgievMonienRadchenko2024} gives
		\begin{equation}\label{eq:eigenvalue-regular-expansion-pi}
			\lambda_1(\widehat R_N)
			=j_{0,1}^2\left(1+\frac{4\zeta(3)}{N^3}+O(N^{-5})\right).
		\end{equation}
		In particular, the coefficient of $N^{-4}$ in the eigenvalue expansion vanishes.
		
		Since $R_N=\sqrt{A/\pi}\,\widehat R_N$, the scaling laws for torsional rigidity and the first Dirichlet eigenvalue yield
		\begin{align}
			T(R_N)
			&=\left(\frac{A}{\pi}\right)^2T(\widehat R_N)\notag\\
			&=\frac{A^2}{8\pi}-\frac{A^2\zeta(3)}{\pi N^3}
			+\frac{\pi^3A^2}{45N^4}
			+O\left(\frac{A^2}{N^5}\right),
			\label{eq:torsion-regular-expansion-A}
		\end{align}
		and
		\begin{equation}\label{eq:eigenvalue-regular-expansion-A}
			\lambda_1(R_N)
			=\frac{\pi j_{0,1}^2}{A}
			\left(1+\frac{4\zeta(3)}{N^3}+O(N^{-5})\right).
		\end{equation}
		Therefore,
		\begin{align}
			\frac{\pi j_{0,1}^4}{8\lambda_1(R_N)^2}
			&=\frac{A^2}{8\pi}
			\left(1+\frac{4\zeta(3)}{N^3}+O(N^{-5})\right)^{-2}\notag\\
			&=\frac{A^2}{8\pi}-\frac{A^2\zeta(3)}{\pi N^3}
			+O\left(\frac{A^2}{N^5}\right).
			\label{eq:KJ-comparison-expansion}
		\end{align}
		Subtracting \eqref{eq:KJ-comparison-expansion} from \eqref{eq:torsion-regular-expansion-A} proves \eqref{eq:eta-asymptotic}.
	\end{proof}
	
	We now derive the resulting asymptotic restriction on possible counterexamples. Suppose that $P$ is a tangential $N$-gon such that
	\[
	|P|=|R_N|=A
	\qquad\text{and}\qquad
	\lambda_1(P)\le \lambda_1(R_N).
	\]
	Then \eqref{eq:perimeter-eigenvalue-criterion} cannot hold, and hence
	\begin{equation}\label{eq:reverse-perimeter-condition}
		\frac{A^2}{8N\tan(\pi/N)}
		\left(1-\frac{L(R_N)^2}{L(P)^2}\right)^2
		<\eta_N(A).
	\end{equation}
	Equivalently,
	\begin{equation}\label{eq:counterexample-first-bound-exact}
		1-\frac{L(R_N)^2}{L(P)^2}
		<\left(\frac{8N\tan(\pi/N)}{A^2}\eta_N(A)\right)^{1/2}.
	\end{equation}
	By Lemma \ref{lem:KJ-gap-asymptotic} and
	\[
	N\tan\frac{\pi}{N}=\pi+O(N^{-2}),
	\]
	we obtain
	\begin{equation}\label{eq:counterexample-first-bound}
		1-\frac{L(R_N)^2}{L(P)^2}
		<\pi^2\sqrt{\frac{8}{45}}\,\frac1{N^2}+O(N^{-3}).
	\end{equation}
	Furthermore, $L(P)\ge L(R_N)$ for tangential $N$-gons of the same area, because \eqref{eq:z-perimeter} gives $L(P)/L(R_N)=\sqrt z\ge1$. Since \eqref{eq:counterexample-first-bound} shows that $1-L(R_N)^2/L(P)^2=O(N^{-2})$,
	\begin{align*}
		\frac{L(P)}{L(R_N)}
		&=\left(1-\left(1-\frac{L(R_N)^2}{L(P)^2}\right)\right)^{-1/2}\\
		&=1+\frac12\left(1-\frac{L(R_N)^2}{L(P)^2}\right)+O(N^{-4}).
	\end{align*}
	Consequently, every possible counterexample satisfies
	\begin{equation}\label{eq:counterexample-perimeter-ratio-bound}
		\frac{L(P)}{L(R_N)}-1
		<\frac{\pi^2}{2}\sqrt{\frac{8}{45}}\,\frac1{N^2}+O(N^{-3}).
	\end{equation}
	The remainder in \eqref{eq:counterexample-perimeter-ratio-bound} can be taken uniformly over all possible counterexamples $P$ (for fixed $A$): the right-hand side of \eqref{eq:counterexample-first-bound-exact} is independent of $P$, and the Taylor expansion above is uniform when $1-L(R_N)^2/L(P)^2=O(N^{-2})$.
	
	\begin{corollary}[Asymptotic exclusion of counterexamples away from the regular polygon]\label{cor:spectral-exclusion}
		Let
		\[
		C_*:=\frac{\pi^2}{2}\sqrt{\frac{8}{45}}.
		\]
		For every $\varepsilon>0$, there exists $N_\varepsilon\ge3$ such that, for every $N\ge N_\varepsilon$ and every tangential $N$-gon $P$ satisfying
		\[
		|P|=|R_N|=A,
		\]
		the condition
		\[
		\frac{L(P)}{L(R_N)}-1
		\ge\frac{C_*+\varepsilon}{N^2}
		\]
		implies
		\[
		\lambda_1(P)>\lambda_1(R_N).
		\]
		Equivalently, every possible counterexample to the polygonal Faber--Krahn inequality within the class of tangential $N$-gons must satisfy
		\[
		\frac{L(P)}{L(R_N)}-1
		<\frac{C_*+\varepsilon}{N^2}
		\]
		for all sufficiently large $N$.
	\end{corollary}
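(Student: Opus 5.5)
The plan is to read the corollary off the discussion just before it, making the $O(N^{-3})$ remainder explicit through a threshold $N_\varepsilon$. Fix $A>0$ and $\varepsilon>0$. It suffices to show that for all large $N$, every tangential $N$-gon $P$ of area $A$ with $L(P)/L(R_N)-1\ge (C_*+\varepsilon)/N^2$ satisfies the perimeter criterion \eqref{eq:perimeter-eigenvalue-criterion}. The conclusion $\lambda_1(P)>\lambda_1(R_N)$ then follows directly from Proposition \ref{prop:eigenvalue-consequence}, through Corollary \ref{cor:perimeter-deficit}.

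\emph{Step 1: the left side of the criterion.} Set $s=L(P)/L(R_N)\ge1$, which holds by \eqref{eq:z-perimeter}. The function $s\mapsto 1-s^{-2}$ is increasing, so the hypothesis gives
\[
1-\frac{L(R_N)^2}{L(P)^2}\ge 1-\Bigl(1+\frac{C_*+\varepsilon}{N^2}\Bigr)^{-2}=\frac{2(C_*+\varepsilon)}{N^2}+O(N^{-4}),
\]
with a remainder that depends only on $N$, not on $P$. Squaring, and using $N\tan(\pi/N)=\pi+O(N^{-2})$, the left side of \eqref{eq:perimeter-eigenvalue-criterion} is at least
\[
\frac{A^2}{8\pi}\cdot\frac{4(C_*+\varepsilon)^2}{N^4}\,(1+O(N^{-2})).
\]

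\emph{Step 2: the right side.} By Lemma \ref{lem:KJ-gap-asymptotic}, $\eta_N(A)=\pi^3A^2/(45N^4)+O(A^2N^{-5})$. Since $C_*=\frac{\pi^2}{2}\sqrt{8/45}$, we have $4C_*^2=8\pi^4/45$, and therefore
\[
\frac{A^2}{8\pi}\cdot\frac{4C_*^2}{N^4}=\frac{\pi^3A^2}{45N^4},
\]
which is exactly the leading term of $\eta_N(A)$. Replacing $C_*$ by $C_*+\varepsilon$ leaves a strict margin in the leading $N^{-4}$ coefficient, namely $\frac{A^2}{8\pi}\bigl(4(C_*+\varepsilon)^2-4C_*^2\bigr)>0$.

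\emph{Step 3: choosing $N_\varepsilon$.} The margin from Step 2 absorbs the $O(N^{-5})$ and $O(N^{-6})$ remainders once $N$ is large. Because these remainders are independent of $P$, there is an $N_\varepsilon$ such that \eqref{eq:perimeter-eigenvalue-criterion} holds for every such $P$ whenever $N\ge N_\varepsilon$.

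\emph{The equivalent formulation} is the contrapositive of this statement. The only delicate point in the whole argument is that the bounds are uniform in $P$. This is handled by the monotonicity in $s$ used in Step 1, which replaces the Taylor expansion of $L(P)/L(R_N)$ with a bound whose remainder is controlled independently of $P$.
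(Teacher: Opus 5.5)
Your proof is correct and follows essentially the same route as the paper. Both arguments reduce the claim to the perimeter criterion \eqref{eq:perimeter-eigenvalue-criterion} and match its leading $N^{-4}$ coefficient, $\frac{4C_*^2}{8\pi}=\frac{\pi^3}{45}$, against the asymptotics of $\eta_N(A)$ from Lemma \ref{lem:KJ-gap-asymptotic}. The paper argues by contraposition, expanding $(1-x)^{-1/2}$ and adding a separate uniformity remark; you argue directly and obtain uniformity in $P$ from the monotonicity of $s\mapsto 1-s^{-2}$, which is a cosmetic but slightly cleaner variation.
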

	
	\begin{proof}
		Suppose that $P$ is a possible counterexample, that is,
		\[
		\lambda_1(P)\le\lambda_1(R_N).
		\]
		By \eqref{eq:counterexample-perimeter-ratio-bound},
		\[
		\frac{L(P)}{L(R_N)}-1
		<\frac{C_*}{N^2}+O(N^{-3})
		\qquad\text{as }N\to\infty.
		\]
		By the uniformity just noted, for every fixed $\varepsilon>0$ the remainder $O(N^{-3})$ is bounded above by $\varepsilon N^{-2}$ once $N$ is sufficiently large, uniformly over all such $P$. Hence
		\[
		\frac{L(P)}{L(R_N)}-1
		<\frac{C_*+\varepsilon}{N^2}.
		\]
		The asserted implication follows by contraposition.
	\end{proof}
	
	Thus, for large $N$, every possible counterexample within the tangential class lies in an explicit relative perimeter layer of width $O(N^{-2})$ around $R_N$.
	
	\section*{Statement on the use of AI tools}
	During the preparation of this manuscript, the authors used ChatGPT (OpenAI) for language and presentation assistance and as an interactive tool for exploratory mathematical discussion. These interactions served to explore and refine possible lines of argument. All mathematical statements, proofs, and computations in the final manuscript were independently verified by the authors, who take full responsibility for the article.

\end{document}